\newtheorem{Theorem}{Theorem}[section]
\newtheorem{prop}[Theorem]{Proposition}
\def\beq#1#2\eeq{%
        \begin{equation}%
        \label{#1}%
            #2%
        \end{equation}%
   }
\title[Theta divisors and permutohedra]{Theta divisors and permutohedra}
\author{V.M. Buchstaber}\address{Steklov Mathematical Institute and Moscow State University, Russia}
\email{buchstab@mi-ras.ru}
\author{A.P. Veselov}
\address{Department of Mathematical Sciences,
Loughborough University, Loughborough LE11 3TU, UK}
\email{A.P.Veselov@lboro.ac.uk}
\begin{document}

\maketitle

\begin{abstract}
 We establish an intriguing relation of the smooth theta divisor $\Theta^n$ with permutohedron $\Pi^n$ and the corresponding toric variety $X_\Pi^n.$ In particular, we show  that the generalised Todd genus of the theta divisor $\Theta^n$ coincides with $h$-polynomial of permutohedron $\Pi^n$ and thus is different from the same genus of $X_\Pi^n$ only by the sign $(-1)^n.$ As an application we find all the Hodge numbers of the theta divisors in terms of the Eulerian numbers. We reveal also interesting numerical relations between theta-divisors and Tomei manifolds from the theory of the integrable Toda lattice.
\end{abstract}



\section{Introduction}

The theta divisors are very classical object of study going back to Riemann (see  \cite{GruH}). They can be given as the zero set of the Riemann $\theta$-function
of a principally polarised abelian varieties $A^{n+1}.$ It is known after Andreotti and Mayer \cite{AM67} that the corresponding theta divisor $\Theta^n \subset A^{n+1}$ is a smooth projective variety for a general ppav $A^{n+1}$. It has natural subvarieties given by the smooth intersections
\beq{inters0}
\Theta_{k}^{n-k}=\Theta^n\cap \Theta^n(a_1) \cap \dots \Theta^n(a_k)
\eeq
of $\Theta^n$ with $k$ general translates $\Theta^n(a_i), \, a_i \in A^{n+1}$ of the theta divisor $\Theta^n.$

Recently it was discovered that the theta divisors $\Theta^n$ play an important role in the theory of complex cobordisms \cite{BV2020}. Namely, we proved that $\Theta^n$ can be chosen as irreducible algebraic representatives of the coefficients of the Chern-Dold character in complex cobordisms and described the action of the Landweber-Novikov operations on them in terms of $\Theta_{k}^{n-k}.$

 The aim of this paper is to establish a link of the theta divisor $\Theta^n$ with combinatorics of permutohedron $\Pi^n$ and the corresponding permutohedral toric variety $X_\Pi^n$, which we found very intriguing. 
Recall that permutohedron $\Pi^n$ is a simple $n$-dimensional lattice polytope, which we can choose to be the convex hull of the points $\sigma(\rho)\in \mathbb R^{n+1}, \,\rho=(1,2,\dots,n, n+1), \, \sigma \in S_{n+1}.$ 

Our first result computes the Todd genus of $\Theta_{k}^{n-k}$ in terms of combinatorics of the permutohedron.

\begin{Theorem} The Todd genus of the self-intersection of theta divisors 
\beq{rel}
Td(\Theta_{k}^{n-k})=(-1)^{n-k} f_{n-k}(\Pi^n)
\eeq
up to a sign coincides with the number  $f_{n-k}(\Pi^n)$ of the codimension $k$ faces of permutohedron $\Pi^n$.
\end{Theorem}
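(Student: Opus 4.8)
The plan is to identify the Todd genus of the smooth variety $\Theta_k^{n-k}$ with its arithmetic genus, and then to compute the latter by a Koszul resolution on the abelian variety. Because $\Theta^n$ is smooth and the translates $\Theta^n(a_1),\dots,\Theta^n(a_k)$ are in general position, $\Theta_k^{n-k}$ is a smooth complete intersection of dimension $n-k$ in $A^{n+1}$ (this transversality is established in \cite{BV2020}); as a closed submanifold of the projective variety $A^{n+1}$ it is itself projective, so Hirzebruch--Riemann--Roch gives $Td(\Theta_k^{n-k})=\chi(\mathcal{O}_{\Theta_k^{n-k}})$. Setting $D_0=\Theta^n$, $D_i=\Theta^n(a_i)$ for $1\le i\le k$ and $Y=D_0\cap\dots\cap D_k$, the Koszul resolution of $\mathcal{O}_Y$ yields
\[
\chi(\mathcal{O}_Y)\;=\;\sum_{S\subseteq\{0,1,\dots,k\}}(-1)^{|S|}\,\chi\bigl(\mathcal{O}_A(-D_S)\bigr),\qquad D_S:=\sum_{i\in S}D_i .
\]

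Next I would evaluate each summand by Riemann--Roch on $A=A^{n+1}$: for any line bundle, $\chi(\mathcal{O}_A(M))=\tfrac{1}{(n+1)!}\int_A c_1(M)^{n+1}$, which depends only on the numerical class of $M$. Every $D_i$ is a translate of $\Theta^n$, hence numerically equivalent to it, and for a principal polarization $\int_A c_1(\mathcal{O}_A(\Theta^n))^{n+1}=(n+1)!$; therefore a set $S$ with $|S|=j$ contributes $\chi(\mathcal{O}_A(-D_S))=(-1)^{n+1}j^{\,n+1}$. Collecting terms by $|S|$ gives
\[
\chi(\mathcal{O}_Y)\;=\;(-1)^{n+1}\sum_{j=0}^{k+1}(-1)^{j}\binom{k+1}{j}j^{\,n+1},
\]
and by the classical identity $\sum_{j=0}^{m}(-1)^{j}\binom{m}{j}j^{N}=(-1)^{m}\,m!\,S(N,m)$ (with $S(N,m)$ the Stirling number of the second kind) this simplifies to $\chi(\mathcal{O}_Y)=(-1)^{n+k}\,(k+1)!\,S(n+1,k+1)$.

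It remains to count the faces of the permutohedron. The faces of $\Pi^n$ of dimension $d$ are in bijection with ordered partitions of $\{1,\dots,n+1\}$ into $n+1-d$ nonempty blocks; with $d=n-k$ this is $k+1$ blocks, and the number of such set compositions is precisely $(k+1)!\,S(n+1,k+1)$, so $f_{n-k}(\Pi^n)=(k+1)!\,S(n+1,k+1)$. Since $(-1)^{n-k}=(-1)^{n+k}$, this equals $Td(\Theta_k^{n-k})$, which proves \mref{rel}. The only input beyond formal manipulation is the smoothness and transversality of the general intersection $\Theta_k^{n-k}$, taken from \cite{BV2020}; the binomial--Stirling identity and the combinatorial model of the face lattice of $\Pi^n$ by ordered set partitions are classical, so the main (though essentially routine) point to be careful about is the Riemann--Roch bookkeeping and the tracking of signs on $A^{n+1}$.
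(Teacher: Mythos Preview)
Your proof is correct but takes a genuinely different route from the paper's. The paper proceeds via complex cobordism: it uses that the Chern--Dold character $ch_U$ commutes with the Landweber--Novikov operations, together with the identification $S_{(k)}[\Theta^n]=[\Theta_k^{n-k}]$ from \cite{BV2020}, to obtain a closed-form generating function
\[
Td_\Theta(x,b,t)=\frac{1-e^{-bx}}{b-t(1-e^{-bx})},
\]
and then matches this against Buchstaber's generating function $F_\Pi(x,s,t)$ for the face polynomials of all permutohedra at once, observing $F_\Pi(x,-b,t)=Td_\Theta(x,b,t)$. You bypass cobordism entirely: you compute $\chi(\mathcal O_{\Theta_k^{n-k}})$ directly by the Koszul resolution on $A^{n+1}$, use Riemann--Roch on the abelian variety ($\chi(L)=c_1(L)^{n+1}/(n+1)!$) and the self-intersection $\Theta^{n+1}=(n+1)!$ of the principal polarization, and then invoke the classical Stirling identity and the ordered set-partition model for faces of $\Pi^n$. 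What the paper's method buys is a single generating-function identity that packages all $n,k$ simultaneously and ties in with the $K$-theory Chern numbers and the later two-parameter Todd genus results; what your method buys is self-containment and elementarity---only classical algebraic geometry and combinatorics are needed to extract the individual formula~\mref{rel}, with no appeal to cobordism theory or to the Chern--Dold character.
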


Since it is known that 
$
f_{n-k}(\Pi^n)=(k+1)! \, S(n+1,k+1),
$
where $S(n,k)$ are the Stirling numbers of second kind \cite{Stanley},
we have the formula
$$
Td(\Theta_{k}^{n-k})=(-1)^{n-k} (k+1)! \, S(n+1,k+1).
$$


Our second result reveals the relation of the two-parameter Todd genus $Td_{s,t}$ of theta divisor $\Theta^n$ with the $h$-polynomial of permutohedron $\Pi^n$. 

Recall that the $h$-polynomial $h_{P^n}(s,t)$ of $n$-dimensional simple polytope $P^n$ is related to $f$-polynomial $f_{P^n}(s,t)=\sum_{k=0}^n f_{n-k}(P^n)s^{n-k}t^k$ by simple change
$$
h_{P^n}(s,t):=f_{P^n}(s-t,t).
$$

The two-parameter Todd genus $Td_{s,t}$ is a homogeneous version of the Hirzebruch $\chi_y$-genus introduced by Krichever \cite{Krich}. It corresponds to the generating series
$$
Q(x)=\frac{x(se^{tx}-te^{sx})}{e^{sx}-e^{tx}}.
$$
When $s=y, t=-1$ it reduces to the $\chi_y$-genus \cite{Hirz}.

\begin{Theorem} The two-parameter Todd genus $Td_{s,t}(\Theta^n)$ of the theta divisor $\Theta^n$ coincides with the $h$-polynomial of permutohedron $\Pi^n$:
\beq{equalint}
Td_{s,t}(\Theta^n)=h_{\Pi^n}(s,t).
\eeq
In particular, the $\chi_y$-genus of theta divisors is
\beq{equaly}
\chi_y(\Theta^n)=h_{\Pi^n}(y,-1)=(-1)^nA_{n+1}(-y),
\eeq
where $A_n(y)$ are the classical Eulerian polynomials \cite{Stanley}.
\end{Theorem}


As an application we compute all the Hodge numbers $h^{p,q}(\Theta^n)$.

\begin{Theorem} 
The Hodge numbers $h^{p,q}$ of theta divisor $\Theta^n$ with $p+q \neq n$ are given explicitly by
$$
h^{p,q}(\Theta^{n})=h^{n-p,n-q}(\Theta^n)={n+1 \choose  p} {n+1 \choose q}, \quad p+q\leq n-1.
$$
When $p+q=n$ we have
$
h^{p,n-p}(\Theta^{n})=A_{n+1,p}-S_{n,p},
$
where $A_{n,p}$ are the Eulerian numbers and 
$$
S_{n,p}=(-1)^{p} {n+2 \choose  p+1} \left[(-1)^p\frac{2p-n}{n+2}{n+1 \choose  p}+\sum_{k=0}^{p-1}(-1)^k{n+1 \choose k}\right].
$$
In particular, 
$$
h^{0,n}(\Theta^{n})=n+1, \quad h^{1,n-1}(\Theta^n)=2^{n+1}-(n+2)+\frac{n^2(n+1)}{2}.
$$
\end{Theorem}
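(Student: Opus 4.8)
The plan is to determine the Hodge numbers in two stages: away from the middle dimension they will be read off from the cohomology of the ambient abelian variety via the Lefschetz hyperplane theorem and Poincar\'e duality, while in the middle dimension $p+q=n$ they will be pinned down by the $\chi_y$-genus already computed in \mref{equaly}.

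First, $\Theta^n$ is a smooth ample divisor in $A^{n+1}$, so by the Lefschetz hyperplane theorem the restriction $H^k(A^{n+1})\to H^k(\Theta^n)$ is an isomorphism of Hodge structures for $k\le n-1$. The cohomology of an abelian variety of dimension $g$ is the exterior algebra on $H^1=H^{1,0}\oplus H^{0,1}$ with $\dim H^{1,0}=\dim H^{0,1}=g$, hence $h^{p,q}(A^{n+1})=\binom{n+1}{p}\binom{n+1}{q}$, and therefore $h^{p,q}(\Theta^n)=\binom{n+1}{p}\binom{n+1}{q}$ whenever $p+q\le n-1$. Since $\Theta^n$ is smooth projective, Poincar\'e (equivalently Serre) duality gives $h^{p,q}(\Theta^n)=h^{n-p,n-q}(\Theta^n)$ for all $p,q$; combined with the previous case this also settles $p+q\ge n+1$, and using $\binom{n+1}{n-p}=\binom{n+1}{p+1}$ one recovers the first displayed formula of the theorem. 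The K\"ahler symmetry $h^{p,q}=h^{q,p}$ is automatic and will serve later as a consistency check.

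For the middle dimension I would use Hirzebruch's formula $\chi_y(X)=\sum_p\bigl(\sum_q(-1)^q h^{p,q}(X)\bigr)y^p$ together with \mref{equaly}, which gives $\chi_y(\Theta^n)=(-1)^n A_{n+1}(-y)$, so that the coefficient of $y^p$ on the left equals $(-1)^{n-p}A_{n+1,p}$. Fixing $p$ and splitting the inner sum $\sum_q(-1)^q h^{p,q}(\Theta^n)$ into the ranges $q<n-p$, $q=n-p$ and $q>n-p$, the outer two ranges are known from the first stage; evaluating the resulting partial alternating binomial sums by the identity $\sum_{j=0}^{m}(-1)^j\binom{N}{j}=(-1)^m\binom{N-1}{m}$ leaves a single unknown $h^{p,n-p}(\Theta^n)$, and solving the linear relation yields
\[
h^{p,n-p}(\Theta^n)=A_{n+1,p}+\binom{n+1}{p}\binom{n}{p+1}+\binom{n+1}{p+1}\binom{n}{p-1}.
\]
It then remains to recognize that $\binom{n+1}{p}\binom{n}{p+1}+\binom{n+1}{p+1}\binom{n}{p-1}$ equals $-S_{n,p}$ for the $S_{n,p}$ stated in the theorem (which merely leaves one of these alternating sums unevaluated); the Eulerian symmetry $A_{n+1,p}=A_{n+1,n-p}$ together with $h^{p,q}=h^{q,p}$ forces $S_{n,p}=S_{n,n-p}$, a convenient check. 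The two special values then follow by setting $p=0$ and $p=1$ and using $A_{n+1,0}=1$, $A_{n+1,1}=2^{n+1}-n-2$.

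The conceptual ingredients --- the Lefschetz hyperplane theorem, Poincar\'e duality, and the $\chi_y$-genus of $\Theta^n$ established above --- are all in hand, so the real work lies in the sign bookkeeping for the $\chi_y$-expansion and the binomial algebra needed to identify the closed form of $S_{n,p}$. I expect the mildly delicate points to be the boundary cases $p=0$ and $p=n$, where the partial sums degenerate (with the convention $\binom{n}{-1}=0$), and fixing the normalization of $\chi_y$, which one checks against $\chi_{-1}(\Theta^n)=\chi_{\mathrm{top}}(\Theta^n)$.
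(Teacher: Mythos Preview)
Your proof is correct and follows exactly the paper's argument: the Lefschetz hyperplane theorem together with Serre duality determines $h^{p,q}(\Theta^n)$ for $p+q\neq n$, and then one solves for the single unknown $h^{p,n-p}$ in the identity $\chi^p(\Theta^n)=\sum_q(-1)^q h^{p,q}(\Theta^n)=(-1)^{n-p}A_{n+1,p}$. By evaluating \emph{both} alternating partial sums via $\sum_{j=0}^m(-1)^j\binom{N}{j}=(-1)^m\binom{N-1}{m}$ you in fact obtain the tidier closed form $-S_{n,p}=\binom{n+1}{p}\binom{n}{p+1}+\binom{n+1}{p+1}\binom{n}{p-1}$, which the paper leaves with one of the two sums unevaluated.
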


The explicit forms of the Hodge diamonds of $\Theta^n$ for $n=2,3,4$ are shown in Section 5 below.

As a corollary of our results we establish an interesting duality between theta divisor $\Theta^n$ and the permutohedral variety $X_\Pi^n$, which is the toric variety determined by $\Pi^n$ \cite{Ful}.

\begin{Theorem} The Betti number $b_{2k}(X_\Pi^n)$ of the permutohedral variety coincides up to a sign with the Hirzebruch $\chi^k$-genus of the theta divisor $\Theta^n$:
$$
b_{2k}(X_\Pi^n)=(-1)^{n-k}\chi^k(\Theta^n).
$$
The same is true for the two-parameter Todd genus of these two varieties:
$$
Td_{s,t}(X_\Pi^n)=(-1)^n Td_{s,t}(\Theta^n).
$$
\end{Theorem}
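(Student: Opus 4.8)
The plan is to read off both identities from Theorem~1.2 above, which identifies $Td_{s,t}(\Theta^n)$ with the $h$-polynomial $h_{\Pi^n}(s,t)=\sum_{k=0}^n A_{n+1,k}s^k t^{n-k}$, together with the standard toric geometry of the permutohedral variety. First I would record what is needed about $X_\Pi^n$: since the standard permutohedron $\Pi^n$ is a Delzant polytope, $X_\Pi^n$ is a smooth projective toric variety of complex dimension $n$, and by the Danilov--Jurkiewicz description of its cohomology the odd Betti numbers of $X_\Pi^n$ vanish, all its cohomology is of Hodge type $(p,p)$, and $b_{2k}(X_\Pi^n)$ equals the $k$-th entry $h_k(\Pi^n)$ of the $h$-vector of $\Pi^n$. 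By Theorem~1.2 --- equivalently, by applying the $f$-to-$h$ transformation to $f_{n-k}(\Pi^n)=(k+1)!\,S(n+1,k+1)$ --- this $h$-vector is the row of Eulerian numbers, so $b_{2k}(X_\Pi^n)=A_{n+1,k}$.

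For the first identity I would then compute the Hirzebruch genus $\chi^k(\Theta^n)$, which is the coefficient of $y^k$ in $\chi_y(\Theta^n)$. By Theorem~1.2, $\chi_y(\Theta^n)=h_{\Pi^n}(y,-1)=\sum_{k=0}^n A_{n+1,k}(-1)^{n-k}y^k$, hence $\chi^k(\Theta^n)=(-1)^{n-k}A_{n+1,k}$. Multiplying by $(-1)^{n-k}$ and using the previous paragraph gives $(-1)^{n-k}\chi^k(\Theta^n)=A_{n+1,k}=b_{2k}(X_\Pi^n)$, which is the first claim.

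For the second identity I would invoke the formula for the two-parameter Todd (Krichever) genus of the toric variety of a Delzant polytope $P^n$, namely $Td_{s,t}(X_P^n)=(-1)^n h_P(s,t)$; with $P=\Pi^n$ and Theorem~1.2 this gives $Td_{s,t}(X_\Pi^n)=(-1)^n h_{\Pi^n}(s,t)=(-1)^n Td_{s,t}(\Theta^n)$. As a check, setting $s=y$, $t=-1$ recovers $\chi_y(X_\Pi^n)=(-1)^n\chi_y(\Theta^n)$, which is consistent with the first part because for a toric variety $\chi^k(X)=(-1)^k b_{2k}(X)$; one may in fact regard the whole theorem as this single identity of two genera, the Betti-number form being its coefficientwise version after $s=y$, $t=-1$.

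The one genuinely delicate point is the toric formula $Td_{s,t}(X_P^n)=(-1)^n h_P(s,t)$ together with its overall sign. It can be cited from the general theory of toric genera (Buchstaber--Panov), or proved directly by localizing the Hirzebruch genus attached to $Q(x)=x(se^{tx}-te^{sx})/(e^{sx}-e^{tx})$ at the torus-fixed points of $X_P^n$, i.e.\ the vertices of $P$: each local term depends only on the $n$ edge directions of $P$ at that vertex, and the sum over vertices reorganizes into $\sum_k h_k(P)s^k t^{n-k}$ up to a global sign. That sign is $(-1)^n$ because of the normalization of $Q$ --- at $s=1$, $t=0$ one has $Q(x)=x/(e^{x}-1)$, for which already $Td(\mathbb{CP}^n)=(-1)^n$; equivalently, $Td_{s,s}(X^n)=(-s)^n\chi(X)$ for any complex $n$-fold, which for $X_P$ equals $(-1)^n s^n f_0(P)=(-1)^n h_P(s,s)$ and pins the sign down. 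Alternatively one can exploit that $X_\Pi^n$ is an iterated blow-up of $\mathbb{CP}^n$ along coordinate subspaces and track $Td_{s,t}$ (equivalently the $h$-vector) through these blow-ups. The sign bookkeeping in any version is the main obstacle; everything else is a direct translation of Theorem~1.2.
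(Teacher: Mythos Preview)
Your proposal is correct and follows essentially the same route as the paper: both parts are read off from the identification $Td_{s,t}(\Theta^n)=h_{\Pi^n}(s,t)$ together with the standard toric facts $b_{2k}(X_\Pi^n)=h_k(\Pi^n)=A_{n+1,k}$ and the toric genus formula $Td_{s,t}(X_P^n)=(-1)^n h_{P^n}(s,t)$. The paper obtains the latter formula by quoting Panov's computation $\chi_y(X_P^n)=\sum_k h_k(P^n)(-y)^k$ (via Khovanskii's vertex--index interpretation) and then homogenizing, which is exactly the result you cite; your extra sign checks at $(s,t)=(1,0)$ and $s=t$ are correct but not needed once that reference is invoked.
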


This might suggest that the corresponding cobordism classes are related by $[X_\Pi^n]=(-1)^n[\Theta^n].$ This indeed works for $n\leq 2$, but already for $n=3$ this is not the case. In fact we provide a formula expressing the cobordism class $[X_\Pi^n]$ in terms of the theta divisors (see Theorem 6.1 below).

 In the rest of the paper we discuss the connection of $\Theta^n$ and $X_\Pi^n$ with two other manifolds appeared in relation with integrable Toda lattice and known to be related to permutohedra.

The first one is the Tomei manifold $M_T^n$, which is a real $n$-dimensional manifold consisting of the real symmetric tridiagonal matrices with given spectrum. Tomei \cite{Tomei} used the Toda flows to show that $M_T^n$  can be glued from $2^n$ copies of permutohedron and computed its Euler characteristic. 
We use this to show that, in particular, the Euler characteristic of the Tomei manifold equals the signatures of both $X_\Pi^n$ and $\Theta^n$: $$\chi(M_T^n)=\tau(X_\Pi^n)=\tau(\Theta^n).$$ 

We show also that the Hermitian version of Tomei manifold $M^{4n}_{HT}$, studied by Bloch, Flaschka and Ratiu \cite{BFR}, is not diffeomorphic to any symplectic manifold $M^{4n}$ with Hamiltonian action of torus $T^{2n}$ and that $M^{4}_{HT}$ does not admit any almost complex (and hence, any symplectic) structure.

\section{Theta divisors and complex cobordisms }

In this section we describe the results about theta divisors and their role in complex cobordism theory mainly following \cite{BV2020}.
 
Let $A^{n+1}=\mathbb C^{n+1}/\Gamma$ be a principally polarised abelian variety (ppav) with lattice $\Gamma$ generated by the columns of the $(n+1)\times 2(n+1)$ matrix $(I, \,\, \tau)$ with complex symmetric $(n+1)\times (n+1)$ matrix $\tau$ having positive imaginary part \cite{GH}. Its polarisation line bundle $L$ has one-dimensional space of sections generated by the classical Riemann $\theta$-function
\beq{theta}
\theta(z, \tau)=\sum_{l\in \mathbb Z^{n+1}}\exp [\pi i (l, \tau l) + 2\pi i (l, z)], \, z \in \mathbb C^{n+1}.
\eeq
Andreotti and Mayer \cite{AM67}) proved that the corresponding theta divisor $\Theta^n \subset A^{n+1}$ given by $\theta(z,\tau)=0$ is smooth for a general ppav $A^{n+1}$.

In particular, for $n=1$ a generic abelian surface $A^2$ is the Jacobi variety of a smooth genus 2 curve $\mathcal C$ with theta divisor $\Theta^1 \cong \mathcal C$. 
For $n=2$ an indecomposable $A^3$ is Jacobi variety of a genus 3 curve $\mathcal C$ and $\Theta^2 \cong S^2(\mathcal C)$ is smooth for all non-hyperelliptic curves $\mathcal C$. For $n\geq 3$ the general case of $A^{n+1}$  is not Jacobian, and the theta divisor is smooth outside a locus in the moduli space of the abelian varieties of complex codimension 1 (see more on this in \cite{BL,GruH}).

 The topology of smooth theta divisor does not dependent on the choice of such abelian variety and can be studied using the Lefschetz hyperplane theorem (see \cite{IW, BV2020}). In particular, the Euler characteristic is 
 \beq{chi}
 \chi(\Theta^n)=(-1)^n(n+1)!,
 \eeq 
 the fundamental group $\pi_1(\Theta^n)=\pi_1(A^{n+1})=\mathbb Z^{2n}$ for $n\geq 2,$
the  Betti numbers of $\Theta^n$ are
 \beq{betti1}
b_k(\Theta^n)=b_k(A^{n+1})={2n+2 \choose k}=b_{2n-k}(\Theta^n), \,\, k<n,
\eeq
 \beq{betti2}
b_{n}(\Theta^n)=(n+1)!+\frac{n}{n+2} {2n+2 \choose n+1}=(n+1)!+n C_{n+1},
\eeq
where $C_n=\frac{1}{n+1} {2n \choose n}$ is the $n$-th Catalan number \cite{Stanley}.

The theta divisors have natural subvarieties given by the intersections 
\beq{inters2}
\Theta_{k}^{n-k}=\Theta^n\cap \Theta^n(a_1) \cap \dots \Theta^n(a_k)
\eeq
of $\Theta^n$ with $k$ general translates $\Theta^n(a_i), \, a_i \in A^{n+1}$ of the theta divisor $\Theta^n.$ 
 For all $k<n$ and general $a_i \in A^{n+1}, \, i=1,\dots,k$ the variety $\Theta_{k}^{n-k}$ is smooth and irreducible of general type \cite{BV2020}.

%
%
%
%
%
%

In \cite{BV2020} it was discovered that the theta divisors are playing a very special role in the complex cobordism theory \cite{Stong}.

Let $M^m$ be a smooth closed real oriented manifold. By {\it stable complex structure} (or, simply $U$-{\it structure}) on $M^m$ we mean an isomorphism of the real oriented vector bundles
$TM^m\oplus (2N-m)_{\mathbb R}\cong r\xi,$
where $TM^m$ is the tangent bundle of $M^m$, $(2N-m)_{\mathbb R}$ is trivial naturally oriented real $(2N-m)$-dimensional bundle over $M^m$, $\xi$ is a complex vector bundle over $M^m$ and $r\xi$ is its real form. A manifold $M^m$ with a chosen $U$-structure is called $U$-manifold.
 Note that a complex structure on $\xi$ 
 determines complex structure in the stable normal bundle $\nu M^m.$
 
 Two closed smooth real oriented $m$-dimensional $U$-manifolds $M_1$ and $M_2$ are called {\it $U$-cobordant} if there exists a real $(m+1)$-dimensional $U$-manifold $W$ with boundary such that the boundary $\partial W$ is a disjoint union of $M_1^m$ with given orientation and $M_2^m$ with the opposite orientation, and such that the restriction of the stable complex normal bundle $\nu W$ to $M_i$ coincides with the stable complex normal bundles $\nu M_i, \, i=1,2.$ 
 
 The disjoint union and direct product of $U$-manifolds define the commutative graded cobordism ring  $\Omega_U=\sum_{m\geq 0}\Omega^{-m}_U$, where $\Omega^{-m}_U$ is the group of cobordism classes of $m$-dimensional $U$-manifolds. 
 
 The cobordism ring $\Omega_U$ was computed by Milnor \cite{Mil} and Novikov \cite{Nov60}, who proved that  $\Omega_U=\mathbb Z[y_1,\dots,y_n,\dots],\, \deg y_n=-2n$ is the graded polynomial ring of infinitely many generators $y_n, \, n \in \mathbb N.$ 
  The bordism ring $\Omega^U$ is dual to $\Omega_U$. 
  
  There exist corresponding homology $U_*(X)$ (bordisms) and cohomology $U^*(X)$ (cobordisms) theories with $U_*(pt)=\Omega^U$ and $U^*(pt)=\Omega_U$ respectively \cite{Nov}. Geometric construction of cobordisms, using the ideas from both algebraic topology and algebraic geometry was given by Quillen in \cite{Quillen}.

By definition, the Chern-Dold character $ch_U$  is a natural multiplicative transformation of cohomology theories
$$
ch_U: U^*(X)\to H^*(X, \Omega_U\otimes \mathbb Q),
$$
where $U^*(X)$ is the complex cobordism ring of a $CW$-complex $X.$ 

Let $u \in U^2(\mathbb CP^\infty)$ and $z\in H^2(\mathbb CP^\infty)$ be the first Chern classes of the universal line bundle
on $\mathbb CP^\infty$ in the complex cobordisms and cohomology theory respectively.
The Chern-Dold character is uniquely defined by its action on $u$:
$$
ch_U: u \to \beta(z), \quad \beta(z):=z+\sum_{n=1}^\infty[\mathcal B^{2n}]\frac{z^{n+1}}{(n+1)!},
$$
where $\mathcal B^{2n}$ are certain $U$-manifolds, characterised by their properties in \cite{B-1970}. 
In \cite{BV2020} we proved that as the representatives of these cobordism classes one can use the theta divisors:
\beq{CDnew}
\beta(z)=z+\sum_{n=1}^\infty[\Theta^n]\frac{z^{n+1}}{(n+1)!}.
\eeq


As a corollary we have the following explicit expression of the exponential generating function of any Hirzebruch genus $\Phi$ of theta divisors:
\beq{hirzeb}
\Phi(\Theta, z):=\sum_{n=0}^\infty\Phi(\Theta^n)\frac{z^{n+1}}{(n+1)!} = \frac{z}{Q(z)},
\eeq
where $Q(z)=1+\sum_{n\in \mathbb N}a_nz^n$ is the characteristic power series of Hirzebruch genus $\Phi$ (see \cite{Hirz,BV2020}).





Let us introduce the generating function of the Todd genera of the self-intersections of theta divisors as
\beq{F}
Td_\Theta(x,b,t):=\sum_{k,n\geq 0, k\leq n} Td(\Theta_{k}^{n-k})\frac{b^{n-k}t^k x^{n+1}}{(n+1)!}.
\eeq

We can show now that it can be viewed also as the generating function of the $K$-theory Chern numbers \cite{CF} of theta divisors.
Indeed, Conner and Floyd \cite{CF} constructed the transformation $\mu_c: U^*(X) \to K^*(X)$ 
of complex cobordisms to complex $K$-theory, related to Riemann-Roch theorem in algebraic geometry \cite{Hirz}. When $X=pt,$ we have $\mu_c: \Omega^*_U \to K^*(pt)=\mathbb Z[b,b^{-1}]$, where $b$ is the Bott periodicity operator with $\deg b=-2,$ defined by  
\beq{RR}
\mu_c([M^{2n}])=Td(M^{2n})b^n.
\eeq
Using the complex cobordism theory one can define the {\it K-theory Chern numbers} $c^K_\lambda(M^{2n}) \in \mathbb Z[b,b^{-1}]$ of any $U$-manifold $M^{2n}$ as follows
\beq{KCH}
c_\lambda^K(M^{2n}):=Td(S_\lambda[M^{2n}])b^{n-|\lambda|},
\eeq
where $\lambda$ is a partition with $|\lambda| \leq n$ and $S_\lambda$ is the Landweber-Novikov operation \cite{Nov}.
If $\lambda=\emptyset$, then $S_\lambda=Id$  and we have formula (\ref{RR}) for $\mu_c=c^K_{\emptyset}$.

 In \cite{BV2020} we have described explicitly the action of the Landweber-Novikov operations on the theta divisors.

\begin{Theorem} (\cite{BV2020})
If $\lambda$ is not a one-part partition, then $S_\lambda[\Theta^n]=0,$ while for $\lambda=(k), \, k\leq n$ we have
\beq{LN}
S_{(k)}[\Theta^n]=[\Theta_{k}^{n-k}],
\eeq
where $\Theta_{k}^{n-k}$ is the intersection of shifted theta divisors (\ref{inters0}).
\end{Theorem}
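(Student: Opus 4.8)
The plan is to deduce both assertions from the geometric description of the Landweber--Novikov operations in terms of the stable normal bundle, using that the ambient abelian variety $A^{n+1}$ is parallelisable. Recall that for a closed stably complex manifold $M^{2n}$ the operation $S_\omega$ acts on $[M]\in\Omega_U^{-2n}$ through the stable normal bundle: if $M$ is embedded in a closed manifold $F$ with trivial tangent bundle, with normal bundle $\nu$, then the stable normal bundle of $M$ equals $\nu$ up to trivial summands, and $S_\omega[M]$ is represented by a smooth submanifold of $M$ Poincar\'e dual to the characteristic class $s_\omega(\nu)$ attached to the monomial symmetric function $m_\omega$, whenever a transverse representative exists (this is the standard geometric picture, see \cite{Nov}). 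I would first set this up carefully, keeping track of the complex normal structures so that all signs come out positive.

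Next come the two elementary computations. An abelian variety has trivial tangent bundle, so the stable normal bundle of the smooth divisor $\Theta^n\subset A^{n+1}$ is the line bundle $N=\mathcal{O}(\Theta^n)|_{\Theta^n}$, with $c_1(N)=\theta$ the restriction of the polarisation class. A line bundle has a single Chern root, and the monomial symmetric function $m_\omega$ in one variable vanishes as soon as $\omega$ has two or more parts, while $m_{(k)}=\theta^k$. Hence $S_\lambda[\Theta^n]=0$ for every partition $\lambda$ that is not a one-part partition, and $S_{(k)}[\Theta^n]$ is the cobordism class of a smooth submanifold of $\Theta^n$ Poincar\'e dual to $\theta^k$. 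Finally, all translates $\Theta^n(a)$ are linearly equivalent to $\Theta^n$, so each $\Theta^n(a_i)$ cuts out on $\Theta^n$ a divisor dual to $\theta$; for generic $a_1,\dots,a_k$ these $k$ divisors meet transversally, so $\Theta^n\cap\Theta^n(a_1)\cap\cdots\cap\Theta^n(a_k)=\Theta_k^{n-k}$ is a smooth complex submanifold of $\Theta^n$ of dimension $n-k$ and Poincar\'e dual to $\theta^k$ (the genericity needed is exactly the Bertini-type argument that already makes (\ref{inters0}) well defined). Comparing the two paragraphs gives $S_{(k)}[\Theta^n]=[\Theta_k^{n-k}]$.

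The main obstacle is the first step: fixing the precise form of the geometric realisation of $S_\omega$ — in particular, that one may replace the stable normal bundle of the abstract manifold $M$ by the normal bundle of an embedding into a parallelisable $F$, and that the complex-analytic transversality in the last step produces a genuine smooth complex submanifold representing the Poincar\'e dual class (rather than only a cohomology class), with orientations arranged so that no sign intervenes. Once the normal bundle of $\Theta^n$ is recognised as a line bundle, the vanishing of $S_\lambda[\Theta^n]$ for multi-part $\lambda$ and the identification of $S_{(k)}$ with the iterated theta-divisor intersection are essentially forced. A more computational alternative is to start from $ch_U(u)=\beta(z)$ in (\ref{CDnew}) together with the classical facts $S_{(k)}u=u^{k+1}$ and $S_\lambda u=0$ for $\ell(\lambda)\ge 2$ on $U^*(\mathbb{CP}^\infty)$, and to match $\beta(z)^{k+1}=ch_U(u^{k+1})$ with the generating series of the classes $[\Theta_k^{n-k}]$; this route still requires the geometric input that intersecting with a translate of $\Theta^n$ corresponds, at the level of Chern-Dold series, to multiplication by $\beta(z)$.
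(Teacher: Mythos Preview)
The paper does not actually prove this statement: it is quoted verbatim from \cite{BV2020} and used as input for Theorem~2.3. So there is no ``paper's own proof'' to compare against here.

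That said, your geometric argument is correct and is the natural one (and is presumably what \cite{BV2020} does). The key observation --- that $A^{n+1}$ is parallelisable, so the stable normal bundle of $\Theta^n$ is the single line bundle $\mathcal{O}(\Theta^n)|_{\Theta^n}$ --- immediately kills $s_\omega$ for $\ell(\omega)\ge 2$ and reduces $S_{(k)}$ to the Poincar\'e dual of $\theta^k$, which is represented by the transverse intersection $\Theta_k^{n-k}$. Your caveat about carefully setting up the geometric realisation of $S_\omega$ (signs, complex structures on the normal bundle of the dual submanifold) is exactly the point that needs care, but it is standard.

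One remark on your ``computational alternative'': in the context of this paper that route is circular. The identity $\sum_n[\Theta_k^{n-k}]z^{n+1}/(n+1)! = \beta(z)^{k+1}$ is \emph{derived} in the proof of Theorem~2.3 by applying $S_{(k)}$ to $ch_U(u)=\beta(z)$ and then invoking the present theorem (equation~(\ref{LN})) to identify $S_{(k)}[\Theta^n]$ with $[\Theta_k^{n-k}]$. So you cannot use that identity to prove (\ref{LN}) without an independent argument that the self-intersection class of $\Theta^n$ in cobordism is $[\Theta_k^{n-k}]$ --- which is precisely the geometric input you already supplied in your main argument. Stick with the normal-bundle approach.
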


In combination with (\ref{KCH}) this implies the following result.

\begin{prop} 
The generating function of the K-theory Chern numbers of the theta divisors
\beq{KTheta}
K_{\Theta}(x,t):=\sum_{k,n\geq 0, k\leq n} c_{(k)}^K(\Theta^n)\frac{t^k x^{n+1}}{(n+1)!}
\eeq
coincides with the generating function $Td_\Theta(x,b,t).$
\end{prop}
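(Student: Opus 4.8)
The plan is to combine Theorem~2.2 with the Riemann--Roch definition of the $K$-theory Chern numbers in~(\ref{KCH}). First I would substitute $\lambda=(k)$ into~(\ref{KCH}), getting $c_{(k)}^K(\Theta^n)=Td(S_{(k)}[\Theta^n])b^{n-k}$, and then use Theorem~2.2 (the identity~(\ref{LN})) to rewrite $S_{(k)}[\Theta^n]=[\Theta_k^{n-k}]$. Hence $c_{(k)}^K(\Theta^n)=Td(\Theta_k^{n-k})b^{n-k}$ for all $k\le n$. (For the degenerate case one should note that the cases with $k>n$ or with $\lambda$ not a one-part partition contribute nothing, by Theorem~2.2, so restricting the sum in~(\ref{KTheta}) to $k\le n$ is harmless.)

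Next I would plug this into the definition~(\ref{KTheta}) of $K_\Theta(x,t)$:
$$
K_\Theta(x,t)=\sum_{k,n\ge 0,\ k\le n} Td(\Theta_k^{n-k})\,b^{n-k}\,\frac{t^k x^{n+1}}{(n+1)!}.
$$
Comparing term by term with the definition~(\ref{F}) of $Td_\Theta(x,b,t)$, one sees the two triple-indexed sums are literally the same: the summand $Td(\Theta_k^{n-k})\,b^{n-k}t^k x^{n+1}/(n+1)!$ matches exactly after identifying the formal variable $b$ in~(\ref{F}) with the Bott periodicity operator $b$ appearing here. So $K_\Theta(x,t)=Td_\Theta(x,b,t)$, which is the claim.

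I expect there to be essentially no obstacle: this proposition is a bookkeeping corollary, and the only points that deserve a sentence of care are (i) checking that the exponent of $b$ coming from~(\ref{KCH}) with $|\lambda|=|(k)|=k$ is exactly $n-k$, matching the power $b^{n-k}$ built into the generating function~(\ref{F}); and (ii) observing that Theorem~2.2 guarantees all other Landweber--Novikov operations kill $[\Theta^n]$, so no terms are missing and the identification of the two generating series is complete rather than partial. One may also remark that this is precisely why the variable was named $b$ in~(\ref{F}): the Conner--Floyd map~(\ref{RR}) turns the cobordism generating function into the $K$-theory one under $b\mapsto$ Bott element, so the equality $K_\Theta=Td_\Theta$ is the manifestation at the level of generating functions of the Riemann--Roch transformation $\mu_c$.
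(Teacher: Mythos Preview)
Your argument is correct and is exactly the approach the paper takes: the paper simply states that the proposition follows ``in combination with~(\ref{KCH})'' from the Landweber--Novikov identity~(\ref{LN}), and your proposal is a faithful unpacking of that one-line remark. The only quibble is a numbering slip: the result you cite as Theorem~2.2 is Theorem~2.1 in the paper (the proposition itself is numbered~2.2, since propositions and theorems share a counter).
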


Now we give an explicit formula for both these generating functions.

\begin{prop} 
The generating functions $Td_{\Theta}(x,b,t)$ and $K_{\Theta}(x,t)$ can be given explicitly as
\beq{FE}
Td_\Theta(x,b,t)=K_{\Theta}(x,t)=\frac{1-e^{-bx}}{b-t(1-e^{-bx})}.
\eeq
\end{prop}

\begin{proof}
We use the fact that Chern-Dold character $ch_U$ commutes with Landweber-Novikov operations:
\beq{comm}
S_{(k)}\circ ch_U=ch_U \circ S_{(k)} 
\eeq
(see \cite{B-1970}) and that $S_{(k)}u=u^{k+1},$
where $u \in U^2(\mathbb CP^\infty)$ as before is the first Chern class of the universal line bundle
on $\mathbb CP^\infty$ in the complex cobordisms. Applying this to $u \in U^2(\mathbb CP^\infty)$  and using the relations (\ref{CDnew}) and (\ref{LN}) we have 
$$
S_{(k)}\circ ch_U (u)=S_{(k)} (\beta(z))=\sum_{n\geq 0} S_{(k)}([\Theta^n])\frac{z^{n+1}}{(n+1)!}=\sum_{n\geq 0} [\Theta_k^{n-k}]\frac{z^{n+1}}{(n+1)!}.
$$
On the other hand since
$
ch_U \circ S_{(k)}(u)= ch_U(u^{k+1})=\beta(z)^{k+1},
$ 
we have
\beq{123}
\sum_{n\geq 0} [\Theta_k^{n-k}]\frac{z^{n+1}}{(n+1)!}=(\sum_{n\geq 0} [\Theta^n]\frac{z^{n+1}}{(n+1)!})^{k+1}.
\eeq
Applying now the Riemann-Roch transformation (\ref{RR}) to both sides of (\ref{123}) and using the fact that $Td(\Theta^n)=(-1)^n$ (see \cite{BV2020}), we have 
$$
\sum_{n\geq 0} Td(\Theta_{k}^{n-k})b^{n-k}\frac{z^{n+1}}{(n+1)!}=\left(\sum_{n\geq 0} (-1)^n b^n \frac{z^{n+1}}{(n+1)!}\right)^{k+1}=\left(\frac{1-e^{-bz}}{b}\right)^{k+1}.
$$
Multiplying both sides by $t^k$ and adding over $k\leq n$ we have the relation (\ref{FE}) and the claim.
\end{proof}

Remarkably the same generating function describes the combinatorics of the permutohedron.

\section{Topology of theta divisors and combinatorics of permutohedra}

Recall that {\it permutohedron} (aka permutahedron) $\Pi^n$ is simple convex polytope, which is a convex hull of the points $\sigma(x), \sigma \in S_{n+1}$, being the orbit of the symmetric group $S_{n+1}$, acting on a generic point $x \in \mathbb R^{n+1},$ which can be chosen to be $\rho=(1,2,\dots,n,n+1).$ 

It can also be described as the Newton polytope of the Vandermonde polynomial
$\prod_{1\leq i<j\leq n}(x_i-x_j).$ 
For $n=2$ we have hexagon, for $n=3$ - the truncated octahedron shown on Fig. 1.

\begin{figure}[h]
\includegraphics[width=20mm]{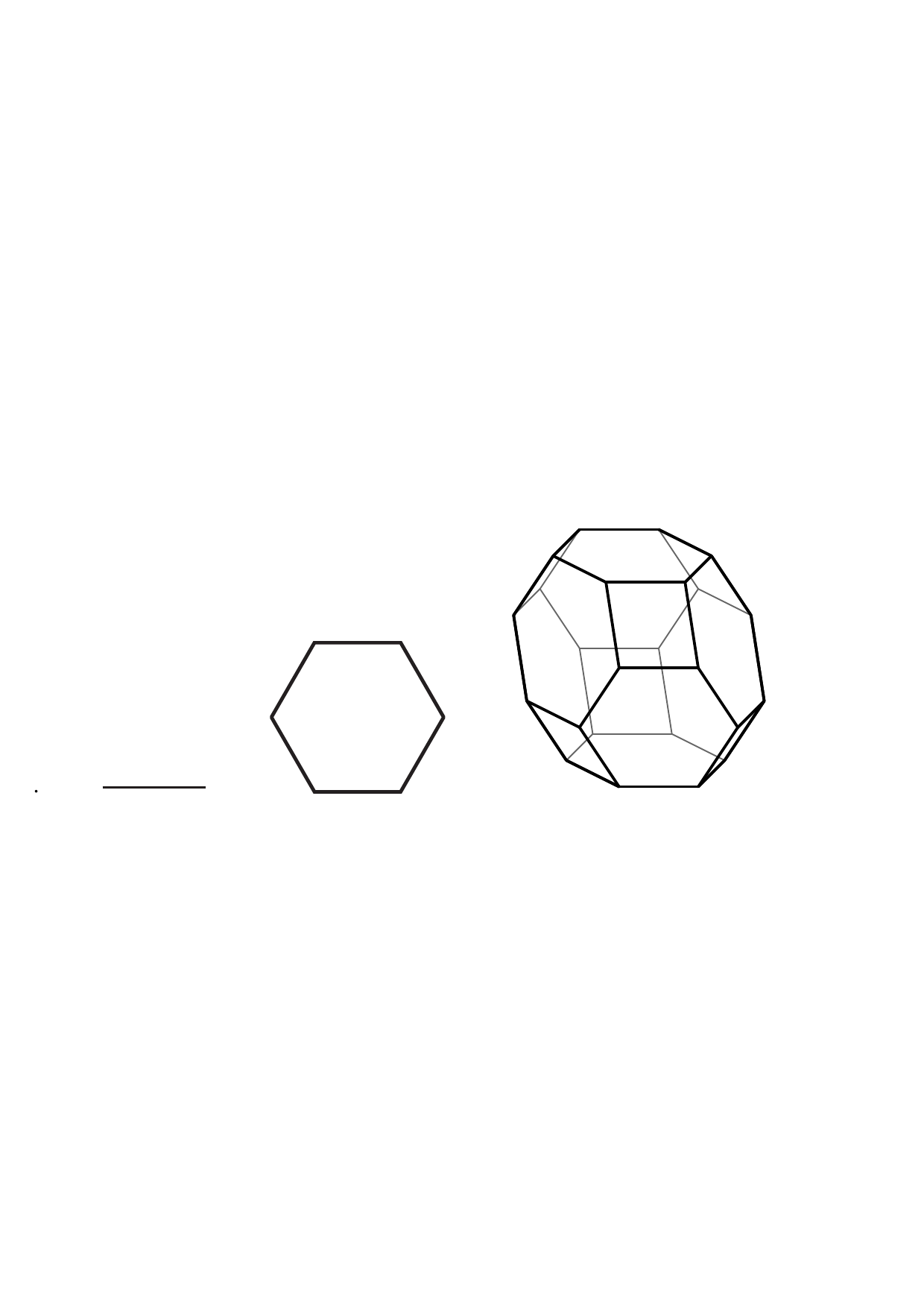} \includegraphics[width=30mm]{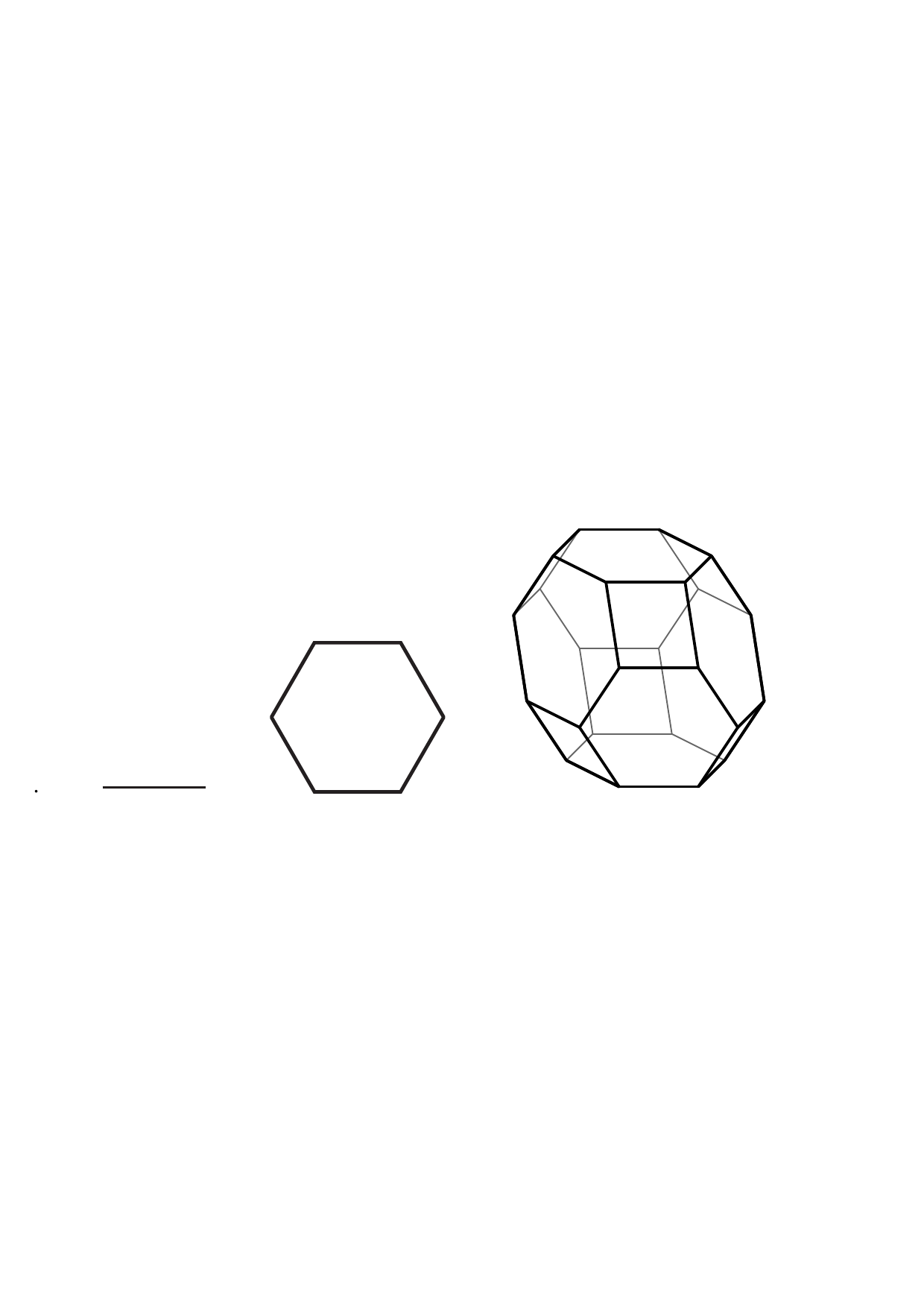} \includegraphics[width=30mm]{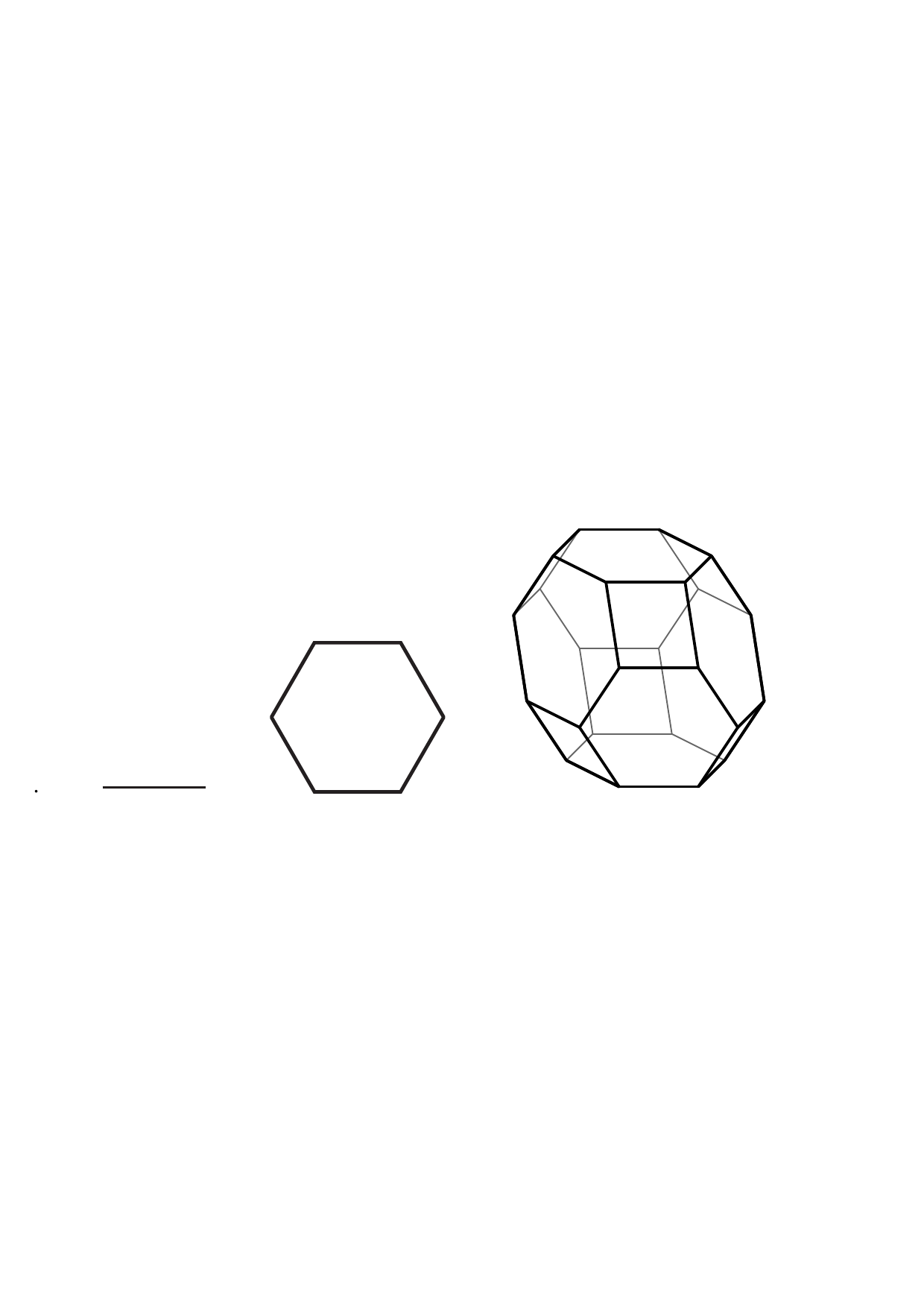}
\caption{Permutohedra in dimension 1,2 and 3.}
\end{figure} 

Its combinatorics is well-studied, see e.g. \cite{GKZ, Post, Ziegler} and references therein.
In particular, it is known that the number $f_{n-k}(\Pi^n)$ of faces of dimension $n-k$ (or, codimension $k$) can be given as
\beq{st}
f_{n-k}(\Pi^n)=(k+1)! \, S(n+1,k+1),
\eeq
where $S(n,k)$ are the {\it Stirling numbers of second kind} \cite{Stanley}. 
These numbers can be computed recursively:
$$
S(n+1,k)=kS(n,k)+S(n, k-1),
$$
with $S(0,0)=1$ and  $S(n,0)=S(0,n)=0$ for $n>0.$

Consider the corresponding {\it $f$-polynomial} of permutohedron $\Pi^n$
\beq{fpol}
f_{\Pi^n}(s,t):=\sum_{k=0}^n f_{n-k}(\Pi^n)s^{n-k}t^k,
\eeq 
where $f_{n-k}(\Pi^n)$ is the number of faces of $\Pi^n$ of dimension $n-k$:
$$
f_{\Pi^1}(s,t)=s+2t, \,\,\, f_{\Pi^2}(s,t)=s^2+6ts+6t^2, \,\,\, f_{\Pi^3}(s,t)=s^3+14s^2t+36s t^2+24t^3, \dots.
$$
Let 
$$
F_\Pi(x,s,t):=\sum_{n\geq 0} f_{\Pi^n}(s,t) \frac{x^{n+1}}{(n+1)!}=\sum_{k,n\geq 0, \, k\leq n} f_{n-k}(\Pi^n)s^{n-k}t^k\frac{x^{n+1}}{(n+1)!}
$$
be their generating function, which can also be considered as the generating function of the face numbers of all permutohedra.

\begin{prop} 
The Todd generating function $Td_{\Theta}(x,b,t)$ of the intersections of theta divisors (\ref{F}) coincides with the permutohedral face generating function $F_\Pi(x,s,t)$ after the substitution $s=-b$:
\beq{Rel}
Td_\Theta(x,b,t)=F_\Pi(x,-b,t).
\eeq
\end{prop}

\begin{proof}
We use the results of \cite{B-2008}, where it was shown that the generating function of the face numbers of permutohedra can be given explicitly as
\beq{Fex}
F_\Pi(x,s,t)=\frac{e^{sx}-1}{s-t(e^{sx}-1)}.
\eeq
This follows from the recursive formula for the boundary $d\Pi^n$ of the permutohedron $\Pi^n$
$$
d\Pi^n=\sum_{i+j=n-1}{n+1 \choose i+1}\Pi^i \Pi^j
$$
(see formula (18), Theorem 17 and Corollary 21 in \cite{B-2008}).

Using this we have the relation
$$
F_\Pi(x,-b,t)=\frac{e^{-bx}-1}{-b-t(e^{-bx}-1)}=\frac{1-e^{-bx}}{b-t(1-e^{-bx})}=Td_\Theta(x,b,t),
$$
which implies the claim.
\end{proof}

As a corollary we have the proof of Theorem 1.1, claiming that the Todd genus of $\Theta_k^{n-k}$ up to a sign coincides with the number of faces of permutohedron $\Pi^n$ of codimension $k:$
\beq{rel}
Td(\Theta_{k}^{n-k})=(-1)^{n-k} f_{n-k}(\Pi^n).
\eeq

%
%
%

In particular, using the explicit form of the Stirling numbers \cite{Stanley}
$$
S(n+1,n)={n+1 \choose 2}, \quad S(n+1,2)=2^n-1,
$$
we have
$$
Td(\Theta_{n-1}^{1})=-n\frac {(n+1)!}{2}, \,\, Td(\Theta_{1}^{n-1})=(-1)^{n-1}(2^{n+1}-2),
$$
so $\Theta_{n-1}^{1}$ is a curve of genus 
$$
g=1+n\frac {(n+1)!}{2}
$$
in agreement with \cite{BV2020}.

\section{The two-parameter Todd genus of theta divisors and $h$-polynomials of permutohedra}

Consider the formal group depending on two parameters $a$ and $b$:
\beq{FG}
x, y \to F(x,y)=\frac{x+y+axy}{1-bxy}.
\eeq
Its exponential can be given as
\beq{expo}
\beta(x)=\frac{e^{sx}-e^{tx}}{se^{tx}-te^{sx}},
\eeq
where parameters $s$ and $t$ are related to $a$ and $b$ as
$$
a=s+t, \, b=st.
$$
When $a=-1$, $b=0$ (corresponding to $s=-1$, $t=0$) we have the formal group with the operation
$$
x, y \to F(x,y)=x+y-xy
$$
with the exponential
$$
\beta(x)=1-e^{-x},
$$
corresponding to the classical Todd genus \cite{Hirz}.

Let $Td_{s,t}$ be the corresponding two-parameter Todd genus, corresponding to the formal group (\ref{FG}) and consider the exponential generating function of this genus for the theta divisors:
\beq{H}
Td^\Theta_{s,t}(x):=\sum_{n\geq 0} Td_{s,t}(\Theta^n) \frac{x^{n+1}}{(n+1)!}.
\eeq

In \cite{BV2020} we have proved that the exponential generating function of any Hirzebruch genus $\Phi$ of theta divisors:
\beq{hirzeb}
\Phi(\Theta, z):=\sum_{n=0}^\infty\Phi(\Theta^n)\frac{z^{n+1}}{(n+1)!} = \frac{z}{Q(z)}=\beta(z)
\eeq
where $Q(x)$ is the generating power series of genus $\Phi$ and $\beta(x)$ is the exponential $\beta$ of the corresponding formal group. In particular, in our case we have
\beq{Hex}
Td^\Theta_{s,t}(x)=\frac{e^{sx}-e^{tx}}{se^{tx}-te^{sx}}.
\eeq

Remarkably the same generating function describes the $h$-polynomials of permutohedra. 

Recall that {\it $h$-polynomial} $h_{P^n}(s,t)$ of $n$-dimensional simple polytope $P^n$ is related to $f$-polynomial $f_{P^n}(s,t)$ by simple change
\beq{hf}
h_{P^n}(s,t):=f_{P^n}(s-t,t)=\sum_{k=0}^n h_{n-k}(P^n)s^{n-k}t^k.
\eeq
The $h$-polynomials are known to be symmetric (Dehn-Sommerville relations): $$h_{P^n}(s,t)=h_{P^n}(t,s),$$
 and their coefficients $h_{k}(P^n)=h_{n-k}(P^n)=\dim H^{2k}(X_P^n)$  are even Betti numbers of the corresponding toric varieties, see \cite{Ful}.

 Now we are ready to prove Theorem 1.2, namely that the two-parameter Todd genus $Td_{s,t}(\Theta^n)$ of the theta divisor $\Theta^n$ coincides with the $h$-polynomial of permutohedron $\Pi^n$:
\beq{equal}
Td_{s,t}(\Theta^n)=h_{\Pi^n}(s,t)
\eeq
and, as a corollary, that
\beq{equal2}
\chi_y(\Theta^n)=(-1)^n A_{n+1}(-y),
\eeq
where $A_{n+1}(y)$ is the classical Eulerian polynomial.


To prove the first part we use the results
of \cite{B-2008}, where it was shown that the generating function of the $h$-polynomials of the permutahedra 
\beq{H}
H_\Pi(x,s,t):=\sum_{n\geq 0} h_{\Pi^n}(s,t) \frac{x^{n+1}}{(n+1)!}=\frac{e^{sx}-e^{tx}}{se^{tx}-te^{sx}}
\eeq
and thus
$H_\Pi(x,s,t)=Td^\Theta_{s,t}(x)$, implying (\ref{equal}).

To prove the second claim recall that the {\it Eulerian number} $A_{n,k}$ is the number of permutations from $S_{n}$ with $k$ descents, see e.g. \cite{Stanley}.
These numbers have the symmetry $A_{n,k}=A_{n, n-k-1}$ and satisfy the recurrence 
$$
A_{n,k}=(n-k)A_{n-1,k-1}+(k+1)A_{n-1,k}.
$$
They can be given also as the sum
\beq{expli}
A_{n,m}=\sum_{k=0}^m(-1)^k {n+1 \choose k} (m+1-k)^n.
\eeq
The corresponding polynomials 
$A_n(s)=\sum_{k=0}^{n-1}A_{n,k} s^k$
were introduced by Euler in 1755 by the relation
$$
\sum_{k=1}^\infty k^nt^n=\frac{tA_n(t)}{(1-t)^{n+1}}.
$$
They can be computed recursively by
$$
A_{n+1}(t)=[t(1-t)\frac{d}{dt} +nt+1]A_n(t), \quad A_1=1:
$$
$$A_1=1, \,\, A_2=s+1,\,\, A_3=s^2+4s+1,\,\,
A_4=s^3+11s^2+11s+1,$$
$$A_5=s^4+26s^3+66s^2+26s+1, \,\, A_6=s^5+57s^4+302s^3+302s^2+57s+1.
$$
The generating function of Eulerian polynomials is known after Euler to be
\beq{genep}
\sum_{n\geq 0}A_n(s)\frac{x^n}{n!}=\frac{s-1}{s-e^{(s-1)x}}.
\eeq
Consider
$$
A(x,s):=\sum_{n\geq 0}A_{n+1}(s)\frac{x^{n+1}}{(n+1)!}=\frac{s-1}{s-e^{(s-1)x}}-1=\frac{e^{sx}-e^x}{se^x-e^{sx}}.
$$
Replacing here $x$ by $tx$ and $s$ by $s/t$ we have the equality (see \cite{B-2008})
$$
\sum_{k, n\geq 0, k\leq n} A_{n+1,k}s^kt^{n-k}\frac{x^{n+1}}{(n+1)!}=\frac{e^{sx}-e^{tx}}{se^{tx}-te^{sx}}=Td^\Theta_{s,t}(x).
$$
 Setting now $s=y, t=-1$ we have formula (\ref{equal2}), completing the proof of Theorem 1.2.

\section{Application: Hodge numbers of the theta-divisors}

Let $H^{p,q}(X)$ be the Dolbeault cohomology group of a complex $n$-dimensional manifold $X$ and $h^{p,q}(X)=\dim H^{p,q}(X).$

Following Hirzebruch \cite{Hirz} consider the index of the elliptic operator $$\bar \partial: \Omega^{p,q}(X)\to \Omega^{p, q+1}(X)$$ for fixed $p$ and consider the corresponding index
\beq{index}
\chi^p(X):=\sum_{q=0}^n(-1)^q h^{p,q}(X).
\eeq
When $p=0$ we have the {\it holomorphic Euler characteristic}, 
 which is known to coincide with the Todd genus of $X$:
$
\chi^0(X)=Td(X)
$
and is related to the {\it arithmetic genus} $\chi_a(X)$ by the formula
$$
\chi_a(X)=(-1)^n(\chi^0(X)-1)
$$
(see  \cite{Hirz}).
To compute other $\chi^p(X)$ introduce the generating polynomial
$$
\chi_y(X):=\sum_{p=0}^n \chi^p(X)y^p.
$$

\begin{Theorem} (Hirzebruch \cite{Hirz})
The value of $\chi_y(X)$ can be given by the Hirzebruch genus with the generating power series
\beq{geny}
Q(x)=\frac{x(1+ye^{-x(1+y)})}{1-e^{-x(1+y)}}.
\eeq
\end{Theorem}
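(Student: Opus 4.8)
The plan is to deduce the theorem from the Hirzebruch--Riemann--Roch theorem (equivalently, from the Atiyah--Singer index theorem applied to the Dolbeault complex twisted by a holomorphic vector bundle). By the Dolbeault isomorphism $H^{p,q}(X)\cong H^q(X,\Omega^p_X)$, where $\Omega^p_X:=\Lambda^p T^*X$ is the sheaf of holomorphic $p$-forms, the index $\chi^p(X)=\sum_{q=0}^n(-1)^q h^{p,q}(X)$ is precisely the holomorphic Euler characteristic $\chi(X,\Omega^p_X)$ of the locally free sheaf $\Omega^p_X$; for $p=0$ this reproduces $\chi^0(X)=Td(X)$, as noted above.

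The first step is to apply Hirzebruch--Riemann--Roch to each $\Omega^p_X$,
\[
\chi^p(X)=\big\langle \mathrm{ch}(\Omega^p_X)\,\mathrm{td}(X),\,[X]\big\rangle ,
\]
and then assemble the generating polynomial, using that $\mathrm{ch}$ is additive and hence extends $K(X)$-linearly over the formal parameter $y$:
\[
\chi_y(X)=\sum_{p=0}^n y^p\chi^p(X)=\Big\langle \mathrm{ch}\big(\Lambda_y T^*X\big)\,\mathrm{td}(X),\,[X]\Big\rangle ,\qquad \Lambda_y E:=\sum_{p\ge 0}y^p\Lambda^p E\in K(X)[y].
\]
Invoking the splitting principle and writing $x_1,\dots,x_n$ for the Chern roots of $TX$, so that $T^*X$ has Chern roots $-x_1,\dots,-x_n$, one gets
\[
\mathrm{ch}\big(\Lambda_y T^*X\big)=\prod_{i=1}^n\big(1+y\,e^{-x_i}\big),\qquad \mathrm{td}(X)=\prod_{i=1}^n\frac{x_i}{1-e^{-x_i}} ,
\]
and multiplying these yields $\chi_y(X)=\big\langle \prod_{i=1}^n \tfrac{x_i(1+y\,e^{-x_i})}{1-e^{-x_i}},\,[X]\big\rangle$, which identifies $\chi_y$ with the Hirzebruch genus whose characteristic power series is $\widetilde Q(x)=\tfrac{x(1+y\,e^{-x})}{1-e^{-x}}$.

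It remains to reconcile $\widetilde Q$ with the stated series \eqref{geny}. Two characteristic series related by $Q(x)=c^{-1}\widetilde Q(cx)$, with $c$ of cohomological degree zero, determine the same genus on manifolds of each fixed complex dimension $n$: the rescaling contributes a factor $c^{-n}$ from the $n$ factors of $Q$ and a compensating $c^{n}$ from extracting the degree-$2n$ component. Taking $c=1+y$ transforms $\widetilde Q$ into
\[
Q(x)=\frac{1}{1+y}\cdot\frac{x(1+y)\big(1+y\,e^{-x(1+y)}\big)}{1-e^{-x(1+y)}}=\frac{x\big(1+y\,e^{-x(1+y)}\big)}{1-e^{-x(1+y)}},
\]
which is exactly \eqref{geny} (and coincides with the two-parameter series $Q(x)=\tfrac{x(se^{tx}-te^{sx})}{e^{sx}-e^{tx}}$ of the Introduction at $s=y,\ t=-1$).

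The only substantive input is the index theorem for the twisted Dolbeault operator; everything else is bookkeeping with Chern roots, so the ``main obstacle'' is just quoting that input correctly. The point that genuinely needs care is the final normalization step: a direct computation delivers $\widetilde Q$ rather than the displayed $Q$, and without the homogeneity argument the statement would appear to be off by the spurious factor $1+y$. For a purely projective-algebraic reading one may cite Hirzebruch \cite{Hirz} directly; the Atiyah--Singer route additionally covers arbitrary compact complex $X$, though only the projective case, where $\Theta^n$ lives, is needed here.
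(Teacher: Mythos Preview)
The paper does not give its own proof of this statement: it is quoted as Hirzebruch's classical theorem with a reference to \cite{Hirz}, and the paper immediately proceeds to apply it via formula~\eqref{hirzeb}. Your argument is correct and is essentially the standard derivation found in Hirzebruch's book: apply Riemann--Roch to each $\Omega^p_X$, sum over $p$ using the $\lambda$-operation $\Lambda_y T^*X$, and read off the characteristic series from the Chern-root factorisation.

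The one point where you add something beyond a straight citation is the normalisation step. The raw computation indeed produces $\widetilde Q(x)=\dfrac{x(1+ye^{-x})}{1-e^{-x}}$, and the displayed series~\eqref{geny} differs from it by the substitution $x\mapsto (1+y)x$ together with division by $1+y$. Your homogeneity argument that $Q(x)=c^{-1}\widetilde Q(cx)$ yields the same genus is valid (the $c^{-n}$ from the $n$ factors cancels the $c^n$ from extracting the top-degree part), and it is worth stating explicitly, since Hirzebruch's choice of normalisation is made precisely so that $y=1$ gives $x/\tanh x$ on the nose. So there is no gap; you have simply supplied the proof that the paper elects to cite.
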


Applying now our general formula (\ref{hirzeb}) we have
$$
\sum_{n=1}^\infty\chi_y(\Theta^n)\frac{z^{n+1}}{(n+1)!}=\frac{1-e^{-x(1+y)}}{1+ye^{-x(1+y)}}.
$$
Since
$$
\frac{1-e^{-x(1+y)}}{1+ye^{-x(1+y)}}=\frac{e^{yx}-e^{-x}}{e^{yx}+ye^{-x}}
$$
we see that we have a particular case of two-parameter Todd genus $Td_{s,t}$ with $s=y, t=-1.$ Thus we have the following result.

\begin{prop} 
The $\chi_y$-genus of the theta divisor $\Theta^n$  can be given as
\beq{genye}
\chi_y(\Theta^n)=(-1)^n A_{n+1}(-y),
\eeq
where $A_n(s)$ is the Eulerian polynomial. In particular,
\beq{chip}
\chi^p(\Theta^n)=(-1)^{n-p}A_{n+1,p},
\eeq
where $A_{n,p}$ are the Eulerian numbers.
\end{prop}

When $y=0$ we have the classical Todd genus 
$$
\chi^0(\Theta^n)=Td(\Theta^n)=A_{n+1,n}(-1)^n=(-1)^n
$$
in agreement with \cite{BV2020}.
When $y=-1$ we have the Euler characteristic $$\chi(\Theta^n)=(-1)^n A_{n+1}(1)=(-1)^n(n+1)!$$
again in agreement with  \cite{BV2020}.
Finally when $y=1$ we have the formula for the signature of the theta divisor for even $n$ 
\beq{tau}
\tau(\Theta^n)=\sum_{k=0}^n (-1)^k A(n+1,k)= \frac{2^{n+2}(2^{n+2}-1)}{n+2} B_{n+2},
\eeq
where $B_{n}$ are the classical {\it Bernoulli numbers}, again in agreement with \cite{BV2020}.

We can use this to compute the {\it Hodge numbers} $h^{p,q}(\Theta^n)=\dim H^{p,q}(\Theta^n),$ where $$H^{p,q}(M)=H^{p,q}_{\bar \partial}(M)=H^q(M, \Omega^p_{M})$$  are the Dolbeault cohomology groups of complex variety $M$, see e.g. \cite{GH}.

First we can apply the Lefschetz hyperplane theorem to the embedding $i: \Theta^n \subset A^{n+1}$, which claims that the homomorphism
$$
i^*: H^{p,q}(A^{n+1}) \to H^{p,q}(\Theta^n)
$$
is an isomorphism for $p+q\leq n-1$ and injective for $p+q=n$ (see \cite{GH}).

Since the Hodge numbers of abelian variety $A^{n+1}$ are 
$$
h^{p,q}(A^{n+1})={n+1 \choose  p} {n+1 \choose q}, \quad 0\leq p,q \leq n+1,
$$
we have
\beq{ht}
h^{p,q}(\Theta^{n})=h^{p,q}(A^{n+1})={n+1 \choose  p} {n+1 \choose q}, \quad p+q \leq n-1.
\eeq
By Serre duality $h^{p,q}(\Theta^{n})=h^{n-p,n-q}(\Theta^n)$, so this implies that
\beq{ht2}
h^{p,q}(\Theta^{n})={n+1 \choose  n-p} {n+1 \choose n-q}={n+1 \choose p+1} {n+1 \choose q+1}, \quad p+q \geq n+1.
\eeq
To compute the remaining Hodge numbers $h^{p,q}(\Theta^{n})$ with $p+q=n$ we can use now our formula (\ref{chip}):
$$
\chi^{p}(\Theta^{n})=\sum_{q=0}^n(-1)^q h^{p,q}(\Theta^n)=
(-1)^{n+p}A_{n+1,p}.
$$
In this sum the only unknown term is $h^{p,n-p}(\Theta^n)$. The straightforward calculations using the properties of  binomial coefficients show that the sum $S_{n,p}$ of the known terms is
\beq{sum}
S_{n,p}=(-1)^{p} {n+2 \choose  p+1} \left[(-1)^p\frac{2p-n}{n+2}{n+1 \choose  p}+\sum_{k=0}^{p-1}(-1)^k{n+1 \choose k}\right].
\eeq

As a result, we have the proof of Theorem 1.3 and the following formula for the Hodge numbers of the theta divisors.

\begin{prop} 
The Hodge numbers $h^{p,q}(\Theta^{n})$ of the theta divisor $\Theta^n$ with $p+q\neq n$ are given by (\ref{ht}), (\ref{ht2}), while when $p+q=n$ we have
\beq{hpn}
h^{p,n-p}(\Theta^{n})=A_{n+1,p}-S_{n,p},
\eeq
where $A_{n,p}$ are the Eulerian numbers and $S_{n,p}$ is given by (\ref{sum}).
\end{prop}

In particular, using formula (\ref{expli}) for the Eulerian numbers  we have
$$
A_{n,1}=2^n-(n+1), \quad A_{n,3}=3^n-2^n(n+1)+\frac{(n+1)(n+2)}{2},
$$
and thus
$$
h^{0,n}(\Theta^{n})=n+1, \quad h^{1,n-1}(\Theta^{n})=2^{n+1}-(n+2)+\frac{n^2(n+1)}{2},
$$
$$
h^{2,n-2}(\Theta^{n})=3^{n+1}-2^{n+1}(n+2)+ \frac{(n+1)(n+2)}{2}+\frac{n^3(n^2-1)}{12}.
$$

The Hodge diamonds of the theta divisors $\Theta^n$ for $n=2,3,4$ have the following form (with Betti numbers shown in the right column):

\medskip

 \[ \begin{tikzcd}[row sep=small, column sep=tiny]
    &   &&&&1&&&& &&  1\\ 
    &   &&&3&&3&&&&& 6 \\
    &   &&3&&10&&3&&&& 16 \\ 
    &   &&&3&&3&& &&& 6\\ 
    &   &&&&1&&&&&& 1 \\ 
    \\ 
\end{tikzcd} \]

 \[ \begin{tikzcd}[row sep=small, column sep=tiny]
    &   &&&&1&&&& &&  1\\ 
    &   &&&4&&4&&&&& 8 \\
    &   &&6&&16&&6&&&& 28 \\ 
    &   &4&&29&&29&&4 &&& 66\\ 
    &   &&6&&16&&6&&&& 28 \\ 
     &   &&&4&&4&&&&& 8 \\
     &   &&&&1&&&& &&  1\\ 
    \\ 
\end{tikzcd} \]

 \[ \begin{tikzcd}[row sep=small, column sep=tiny]
    &   &&&&1&&&& &&  1\\ 
    &   &&&5&&5&&&&& 10 \\
    &   &&10&&25&&10&&&& 45 \\ 
    &   &10&&50&&50&&10 &&& 120\\ 
    &   5&&66&&146&&66 &&5&& 288\\ 
    &   &10&&50&&50&&10 &&& 120\\ 
    &   &&10&&25&&10&&&& 45 \\ 
     &   &&&5&&5&&&&& 10 \\
     &   &&&&1&&&& &&  1\\ 
    \\ 
\end{tikzcd} \]

\section{Relation with permutohedral variety}

There is another natural algebraic variety related to the permutohedron, namely the corresponding toric variety $X_\Pi^n$ called {\it permutohedral}. 
Its normal fan corresponds to the standard $A_n$ hyperplane arrangement in $\mathbb R^{n+1}$ given by $x_i=x_j, \, 1\leq i < j\leq n+1$ with $x_1+\dots+x_{n+1}=0.$
In particular, $X_\Pi^1=\mathbb CP^1$, $X_\Pi^2$ is the degree 6 del Pezzo surface.  

The permutohedral varieties appeared in many relations. In particular, $X_\Pi^n$ is isomorphic to the Losev-Manin \cite{LM} compactification $\bar L_{0,n+3,2}$ of the moduli space $M_{0,n+3}$ (see more on this in \cite{BT-2024}).

Recall that toric variety can be constructed from any simple integer polytope $P^n$ (see \cite{Ful}). The topology of the permutohedral variety is being discussed in the literature (see e.g. the recent papers \cite{CL,LMP} and references therein). 
In particular, it is known that the Hodge numbers $h^{p,q}(X_\Pi^n)=0$ if $p\neq q$ and $h^{p,p}(X_\Pi^n)=h_p(\Pi_n)=A(n+1,p)$ are the Eulerian numbers, which is very different from what we have just seen for the theta divisors.

We claim that actually there is an interesting duality-like relation between the theta divisor $\Theta^n$ and permutohedral variety $X_\Pi^n$. Some evidence of such duality is given by the fact that the Todd genus $Td(X_\Pi^n) =1=(-1)^n Td(\Theta^n)$ and the Euler characteristic is the number of vertices of $\Pi^n$: $$\chi(X_\Pi^n) = (n+1)!=(-1)^n\chi(\Theta^n).$$ 

 We extends this to the proof of Theorem 1.4 claiming that the two-parameter Todd genus $Td_{s,t}(\Theta^n)$ of the theta divisor $\Theta^n$ and of the permutohedral variety $X_\Pi^n$  are different only by a sign:
\beq{duality}
Td_{s,t}(X_\Pi^n)=(-1)^n Td_{s,t}(\Theta^n).
\eeq



To prove this we use the results of T. Panov \cite{Panov}, who computed the $\chi_y$-genus of toric variety $X_P^n$ related to any simple polytope $P^n$ as the sum over vertices $p\in P^n$
$$
\chi_y(X_P^n)=\sum_{p}(-y)^{ind(p)},
$$
where $ind(p)$ is the index of $p$ with respect to generic height function on $P^n$ (see Theorem 3.1 in \cite{Panov}). Since it is known that the number of the vertices of index $k$ equals the coefficient $h_k(P^n)$ (see Khovanskii \cite{Kh1986}) we have that
$$
\chi_y(X_P^n)=\sum_{k=0}^nh_k(P^n)(-y)^k.
$$
This implies that
$$
Td_{s,t}(X_P^n)=h_{P^n}(-s,-t)=(-1)^n h_{P^n}(s,t),
$$
where $h_{P^n}(s,t)$ is the $h$-polynomial of the polytope $P^n$. 
Applying this to $P^n=\Pi^n$ and using our  Theorem 1.2 we have the relation (\ref{duality}).

The first part of Theorem 1.4 claims that the Betti number $b_{2k}(X_\Pi^n)$ coincides up to a sign with the Hirzebruch $\chi^k$-genus of the theta divisor $\Theta^n$:
\beq{relat1}
b_{2k}(X_\Pi^n)=(-1)^{n-k}\chi^k(\Theta^n),
\eeq
so that the Poincare polynomial $P(X_\Pi^n, s)=\sum_{i=0}^{2n}b_i(X_\Pi^n)s^i$ coincides up to a sign with $\chi_y$-genus of $\Theta^n$ with $y=-s^2$:
\beq{relat3}
P(X_\Pi^n, s)=(-1)^n\chi_{-s^2}(\Theta^n).
\eeq

Recall that by the general theory of toric varieties \cite{Ful} its even Betti number 
$b_{2k}(X_P^n)$ equals the coefficient $h_{k}(P^n)$ of the $h$-polynomial of the corresponding polytope $P$ (odd Betti numbers are zero). In our case of permutohedron $P=\Pi^n$ we have that
\beq{Eul}
b_{2k}(X_\Pi^n)=h_{k}(\Pi^n)=A(n+1,k)
\eeq
are the Eulerian numbers (cf. the formulae (\ref{betti1}), (\ref{betti2}) for the theta divisors).
Comparing this with Proposition 5.2 we have the relation (\ref{relat1}) and thus (\ref{relat3}).
This proves Theorem 1.4.


 In particular, for even $n$ using (\ref{tau}) we have the explicit formula for the signature $\tau(X_\Pi^n)$ in terms of Bernoulli numbers:
\beq{tauX}
\tau(X_\Pi^n)=\tau(\Theta^n)= \frac{2^{n+2}(2^{n+2}-1)}{n+2} B_{n+2}.
\eeq

 This suggests that the cobordisms classes of the permutohedral variety $X_\Pi^n$ and theta divisor $\Theta^n$ might be related by
$
[X_\Pi^n]=(-1)^n[\Theta^n].
$
However, this turns out to be true only for $n=1$ and $n=2.$ 
To see this we can use the results from the paper \cite{BPR10} by Buchstaber, Panov and Ray expressing the cobordism class of any toric variety in combination with our formula (\ref{CDnew}) for the Chern-Dold character \cite{BV2020}. In the case of the permutohedral variety we have the following formula.

\begin{Theorem} 
The cobordism class $X_\Pi^n$ of the permutohedral variety  can be expressed in terms of the cobordism classes of the theta divisors as
\beq{xpi}
[X_\Pi^n]=\sum_{\sigma \in S_{n+1}}\prod_{i=1}^n \frac{1}{\beta(t(z_{\sigma(i)}-z_{\sigma(i+1)}))}|_{t=0},
\eeq
where 
$
\beta(z)=z+\sum_{n=1}^\infty[\Theta^n]\frac{z^{n+1}}{(n+1)!}.
$
\end{Theorem}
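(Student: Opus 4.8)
The plan is to feed the combinatorics of the permutohedral (braid) fan into the general localisation formula of Buchstaber, Panov and Ray \cite{BPR10} for the complex cobordism class of a smooth projective toric variety, and then to rewrite the cobordism Euler classes occurring there by means of the Chern--Dold character and formula \mref{CDnew}. Recall first the relevant combinatorics: $X_\Pi^n$ is the toric variety of the normal fan of $\Pi^n$, i.e.\ of the braid (Coxeter $A_n$) fan in $\mathbb R^{n+1}/\mathbb R(1,\dots,1)$, and its torus fixed points are in bijection with the vertices $\sigma(\rho)$, $\sigma\in S_{n+1}$, of $\Pi^n$. At the vertex $\sigma(\rho)$ the $n$ edges of $\Pi^n$ point in the directions $e_{\sigma(i)}-e_{\sigma(i+1)}$, $i=1,\dots,n$ --- the simple roots for the order determined by $\sigma$ --- and these vectors form a basis of the root lattice $\{x\in\mathbb Z^{n+1}:\ \sum_j x_j=0\}$, which is precisely the condition that $X_\Pi^n$ be smooth. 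Writing $z_1,\dots,z_{n+1}$ for the standard degree-two generators of the cohomology of the classifying space of the big torus $(\mathbb C^*)^{n+1}$ (which acts on $X_\Pi^n$ through its quotient by the diagonal), the line bundle attached to the $i$-th edge at $\sigma(\rho)$ has ordinary first Chern class $z_{\sigma(i)}-z_{\sigma(i+1)}$.

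Next I would invoke the localisation formula of \cite{BPR10}: for a smooth projective toric variety the complex cobordism class equals the sum, over the torus fixed points, of local contributions, each of which is the product over the $n$ tangent directions at that point of the reciprocals of the complex cobordism first Chern classes of the associated line bundles --- in other words, the reciprocal of the equivariant cobordism Euler class of the tangent space there. By naturality of $ch_U$ together with $ch_U(u)=\beta(z)$ from \mref{CDnew}, the cobordism first Chern class of a line bundle with ordinary first Chern class $\zeta$ is carried to $\beta(\zeta)$. Substituting the data of the previous paragraph, and introducing a scaling parameter $t$ (replacing each $z_j$ by $tz_j$) so that every factor $1/\beta\bigl(t(z_{\sigma(i)}-z_{\sigma(i+1)})\bigr)$ becomes a Laurent series in $t$ with leading term $1/\bigl(t(z_{\sigma(i)}-z_{\sigma(i+1)})\bigr)$, one arrives at exactly the right-hand side of \mref{xpi}; the localisation theorem then guarantees that after summation over $\sigma\in S_{n+1}$ all the negative powers of $t$ cancel and the coefficient of $t^0$ becomes independent of the $z_j$, so that the evaluation at $t=0$ is legitimate and returns the integral class $[X_\Pi^n]$, regarded as an element of $\Omega_U\otimes\mathbb Q$.

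The genuinely routine parts are the description of the braid fan with its vertex--edge incidences and the translation sending the cobordism first Chern class of a line bundle to $\beta$ of its ordinary first Chern class, which is immediate from \mref{CDnew}. The point requiring care --- and the main obstacle --- is the correct use of \cite{BPR10}: matching the canonical stable complex structure on $X_\Pi^n$, fixing the orientations of the edge vectors and the acting torus, and above all justifying the passage to $t=0$, namely that in $\sum_{\sigma\in S_{n+1}}\prod_{i=1}^n 1/\beta\bigl(t(z_{\sigma(i)}-z_{\sigma(i+1)})\bigr)$ the polar part in $t$ vanishes identically and the surviving value is constant in the $z_j$. For a single vertex the $t^0$-coefficient is only a degree-zero rational function of the $z_j$ (already non-constant for $n=2$), so this constancy is the crux; it is, however, exactly the content of the Atiyah--Bott--Berline--Vergne type localisation underlying \cite{BPR10} (equivalently, the rigidity of the universal toric genus), so no new argument beyond bookkeeping is needed. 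As a consistency check one may verify \mref{xpi} directly for small $n$: for $n=1$ it gives $[X_\Pi^1]=-[\Theta^1]=[\mathbb CP^1]$, and for $n=2$ the cobordism class of the degree-six del Pezzo surface, in agreement with the relation $[X_\Pi^n]=(-1)^n[\Theta^n]$ known to hold in those two cases.
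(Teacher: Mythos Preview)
Your proposal is correct and follows exactly the approach the paper indicates: the authors do not give a detailed proof but simply invoke the Buchstaber--Panov--Ray localisation formula from \cite{BPR10} for the cobordism class of a toric variety together with the Chern--Dold formula \mref{CDnew}, and your write-up fills in precisely those details (the braid-fan combinatorics at each vertex, the translation $u\mapsto\beta(z)$, and the $t$-regularisation). The consistency checks for $n\le 2$ you give match the paper's own remarks following the theorem.
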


In particular, this gives that $[X_\Pi^1]=-[\Theta^1],\, [X_\Pi^2]=[\Theta^2],$ but for $n=3$ the computer calculations using Wolfram Mathematica\footnote{We are grateful to Misha Kornev for helping us with this.} show that 
\beq{xpi3}
[X_\Pi^3]=\frac{1}{2}[\Theta^1]^3-\frac{2}{3}[\Theta^1][\Theta^2]-\frac{5}{6}[\Theta^3].
\eeq
Thus the link between these two classes of varieties does not go beyond the coincidence of generalised Todd genera, which looks even more mysterious.

There is another interesting parallel between the theta divisor $\Theta^n \subset A^{n+1}$ in abelian variety $A^{n+1}$ and open hypersurface $Z^n(\Pi^{n+1}) \subset T^{n+1}$ in the complex torus $T^n=(\mathbb C\setminus 0)^{n+1}$
given as the zero set $f(z)=0$ of a generic Laurent polynomial $f$ with permutohedral Newton polytope. The corresponding Hodge-Deligne numbers were computed by Danilov and Khovanskii in \cite{DKh}. It would be interesting to analyse their results in our context.


\section{Toda lattice and Tomei manifolds}

The (open) finite Toda lattice \cite{Flaschka, Moser} is the Hamiltonian system describing the interaction $n+1$ particles on the line with the Hamiltonian
$$H=\frac{1}{2}\sum_{i=1}^{n+1} p_i^2 +\sum_{j=1}^{n} e^{q_j-q_{j+1}},$$
In the Flaschka variables
$$
a_j=-\frac{1}{2}p_j, \quad j=1,\dots, n+1, \quad b_k=\frac{1}{2}e^{\frac{1}{2}(q_k-q_{k+1})}, \quad k=1,\dots,n
$$
the equations of motion take the algebraic form 
\beq{TF}
\dot{a}_j=2(b_j^2 -b_{j-1}^2), \quad \dot{b}_k =b_k(a_{k+1}-a_k)
\eeq
(we assume here that $b_0=b_{n+1}=0$).

A crucial observation due to Flaschka and Manakov is that the system (\ref{TF}) has the following Lax representation
\beq{Lax}
\dot L = [B, L],
\eeq
where
$$
L= \begin{pmatrix}
a_1  &  b_1 &    &    &    &  \\
b_1  &  a_2 &  b_2  & & & \\
   & \ddots &\ddots &  \ddots & \\
   & & b_{n-1} & a_{n}& b_{n}\\
   & & & b_{n} & a_{n+1}
\end{pmatrix},
B= \begin{pmatrix}
0  &  b_1 &    &    &    &  \\
-b_1  &  0 &  b_2  & & & \\
   & \ddots &\ddots &  \ddots & \\
   & & -b_{n-1} & 0 & b_{n}\\
   & & & -b_{n} & 0
\end{pmatrix}.
$$
This means that the eigenvalues of the matrix $L$ are preserved by the Toda flow. It is known that the coefficients of the characteristic polynomial $P_L(\lambda)=\det(L-\lambda  I)$ Poisson commute, proving that the Toda lattice is integrable in Liouville sense. The corresponding  set $M^n_+$ of the matrices $L$ with $b_i>0$ (called Jacobi matrices) with given spectrum $\Lambda=\{\lambda_1, \dots, \lambda_{n+1}\}$
is open and diffeomorphic to $\mathbb R^{n}$, so we do not have usual Liouville tori with quasiperiodic motion but instead the scattering (see the details in \cite{Moser}).

Following Tomei \cite{Tomei} consider the corresponding compact isospectral set
\beq{Tom}
M^{n}_T=\{L: spec \, L=\{\lambda_1, \dots, \lambda_{n+1}\}\}
\eeq
of all symmetric tridiagonal matrices $L$ (without restrictions that $b_i$ are positive), which we will call {\it Tomei manifold}.  For generic $\Lambda$ this is a smooth real manifold of dimension $n$, which is invariant under the (extended) Toda flow (\ref{TF}).  Tomei used this flow to study the topology of this manifold, which turned out to be quite interesting.\footnote{
Later Gaifullin \cite{Gaif} proved a remarkable fact that Tomei manifold can be used as a universal one in Steenrod's cycle realisation problem.}
In particular, he had shown that it admits the cell decomposition into $2^n$ permutohedra, corresponding to different choices of the signs of $b_i.$  For $n=2$ we have a surface of genus 2 glued from 4 hexagons (see \cite{Tomei}).

\begin{Theorem} (Tomei \cite{Tomei})
$M_T^{n}$ is an aspherical manifold with Euler characteristic
\beq{chit}
\chi(M_T^{n})=B_{n+2} \frac{2^{n+2}(2^{n+2}-1)}{n+2},
\eeq
where $B_n$ is $n$-th Bernoulli number.
\end{Theorem}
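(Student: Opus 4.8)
The plan is to combine Tomei's polytopal decomposition of $M_T^n$ with Buchstaber's generating function $(\ref{Fex})$ for the face numbers of permutohedra. The geometric input, which I would take from \cite{Tomei} and which is really the heart of the matter, is this: the diagonal map $\mu\colon M_T^n\to\mathbb R^{n+1}$, $L\mapsto(a_1,\dots,a_{n+1})$, has image the permutohedron $\Pi^n$ with vertices the $S_{n+1}$-orbit of $\Lambda$ (Schur--Horn), and for each sign vector $\varepsilon\in\{\pm1\}^n$ the restriction of $\mu$ to the closure of $\{L\in M_T^n:\operatorname{sign}b_i=\varepsilon_i\}$ is a homeomorphism onto $\Pi^n$. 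Equivalently, the inverse problem ``recover a symmetric tridiagonal matrix from its spectrum and its diagonal, with prescribed signs of the off-diagonal entries'' is uniquely solvable over $\operatorname{int}\Pi^n$ and degenerates in a controlled way over $\partial\Pi^n$. This presents $M_T^n$ as $2^n$ copies of $\Pi^n$ glued along faces. I would quote this rather than reprove it; it is the one genuine obstacle in the argument.

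Granting the tiling, the Euler characteristic becomes a cell count. A codimension-$k$ face of $\Pi^n$ is specified by $k$ cut positions $1\le i_1<\dots<i_k\le n$, giving a composition $(m_1,\dots,m_{k+1})$ of $n+1$, together with a distribution of the $n+1$ eigenvalues among the $k+1$ blocks. Over the relative interior of such a face every matrix $L\in M_T^n$ splits as an orthogonal direct sum of $k+1$ tridiagonal blocks of sizes $m_1,\dots,m_{k+1}$: the entries $b_{i_1},\dots,b_{i_k}$ vanish and the remaining $n-k$ entries are nonzero. Hence, by the previous paragraph applied to each block, $\mu^{-1}$ of this relative interior is a disjoint union of $\prod_{j}2^{m_j-1}=2^{\,n-k}$ open cells, each a product of interiors of smaller permutohedra and so an open ball of dimension $n-k$. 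Summing $(-1)^{\dim}$ over all cells, and using that $f_{\Pi^n}(s,t)=\sum_{d=0}^n f_d(\Pi^n)s^d t^{\,n-d}$ is homogeneous of degree $n$, we get
\[
\chi(M_T^n)=\sum_{d=0}^n(-1)^d\,2^d\,f_d(\Pi^n)=f_{\Pi^n}(-2,1).
\]

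It remains to evaluate this. By $(\ref{Fex})$,
\[
\sum_{n\ge0}\chi(M_T^n)\,\frac{x^{n+1}}{(n+1)!}=F_\Pi(x,-2,1)=\frac{e^{-2x}-1}{-2-(e^{-2x}-1)}=\frac{1-e^{-2x}}{1+e^{-2x}}=\tanh x .
\]
Since $\tanh$ is odd this yields $\chi(M_T^n)=0$ for odd $n$, as it must be for a closed odd-dimensional manifold; and for even $n$, comparing with the classical expansion $\tanh x=\sum_{m\ge1}\frac{2^{2m}(2^{2m}-1)B_{2m}}{(2m)!}x^{2m-1}$ and taking $m=\frac{n+2}{2}$ gives
\[
\chi(M_T^n)=(n+1)!\cdot\frac{2^{n+2}(2^{n+2}-1)B_{n+2}}{(n+2)!}=\frac{2^{n+2}(2^{n+2}-1)}{n+2}\,B_{n+2},
\]
which is $(\ref{chit})$. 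As a check, $n=2$ gives $\chi(M_T^2)=-2$ (Tomei's genus-$2$ surface glued from $4$ hexagons), and $n=4$ gives $16$. Equivalently one may observe that $f_{\Pi^n}(-2,1)=h_{\Pi^n}(-1,1)=h_{\Pi^n}(1,-1)$ equals the signature $\tau(\Theta^n)=\chi_1(\Theta^n)$ for even $n$, so the Bernoulli formula also follows directly from $(\ref{tau})$.
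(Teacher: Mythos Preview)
The paper does not prove this theorem; it is simply quoted from Tomei \cite{Tomei}. Your proposal, by contrast, supplies an actual argument for the Euler-characteristic formula, and it is correct. Taking Tomei's tiling of $M_T^n$ by $2^n$ permutohedra as input (which you rightly flag as the geometric core), your cell count $\chi(M_T^n)=\sum_d(-1)^d 2^d f_d(\Pi^n)=f_{\Pi^n}(-2,1)$ is valid, and the evaluation via Buchstaber's formula \eqref{Fex} giving $F_\Pi(x,-2,1)=\tanh x$ is a clean way to land on the Bernoulli-number expression. The alternative route you mention at the end, $f_{\Pi^n}(-2,1)=h_{\Pi^n}(-1,1)=h_{\Pi^n}(1,-1)=\tau(\Theta^n)$ via \eqref{equal} and \eqref{tau}, is exactly the relation the paper exploits in the \emph{next} theorem to identify $\chi(M_T^n)$ with $\tau(\Theta^n)$, so your computation dovetails nicely with the surrounding text.

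One omission: the statement also asserts that $M_T^n$ is \emph{aspherical}, and your proposal does not touch this. Tomei's original argument identifies the universal cover and shows it is contractible; nothing in the cell count or the generating-function identity addresses that part. If you intend your write-up to cover the full statement, you should either cite Tomei for asphericity explicitly (as the paper implicitly does) or indicate why the $2^n$-fold permutohedral tiling makes $M_T^n$ a $K(\pi,1)$ (e.g.\ via the small-cover/right-angled Coxeter framework of Davis--Januszkiewicz \cite{DJ}, which the paper invokes a few lines later).
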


Comparing (\ref{chit})  with the formula  (\ref{tau}) for the signature $\tau(\Theta^n)$ of the theta divisor, we see that they coincide.

We extend this observation to the following result, demonstrating interesting relation of the Tomei manifold with $\Theta^n$ and $X_\Pi^n$.
Note that $M_T^n$ is real manifold of dimension $n$, while $\Theta^n$ and $X_\Pi^n$ are complex manifolds of real dimension $2n.$

Let $b_m(X)=\dim H^m(X, \mathbb Z_2)$ be the corresponding Betti numbers of a manifold $X.$ When the cohomology group $H^m(X, \mathbb Z)$ is torsion-free (which is the case for all three our manifolds), $b_m(X)$ is its rank.

\begin{Theorem} 
The numerical characteristics of the Tomei manifold $M_T^n$, theta divisor $\Theta^n$ and permutohedral variety $X_\Pi^n$ are related by
\beq{relat1X}
b_k(M_T^n)=b_{2k}(X_\Pi^n)=(-1)^{n-k}\chi^k(\Theta^n).
\eeq
In particular, the Euler characteristic of $M_T^n$ equals the signatures of $X_\Pi^n$ and $\Theta^n$:
\beq{chitX}
\chi(M_T^{n})=\tau(X_\Pi^n)=\tau(\Theta^n).
\eeq
\end{Theorem}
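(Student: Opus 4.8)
The plan is to reduce everything to a single computation of the $\mathbb Z_2$-Betti numbers of the Tomei manifold. Note first that the second equality in \mref{relat1X}, namely $b_{2k}(X_\Pi^n)=(-1)^{n-k}\chi^k(\Theta^n)$, is precisely relation \mref{relat1} of Theorem 5.1, and that by \mref{Eul} and \mref{chip} both sides of it equal the Eulerian number $h_k(\Pi^n)=A(n+1,k)$. Hence the only new content of \mref{relat1X} is the first equality, i.e.\ the assertion
\[
b_k(M_T^n;\mathbb Z_2)=A(n+1,k),\qquad 0\le k\le n .
\]
The ``in particular'' part \mref{chitX} does not require this: for even $n$ it is immediate from Tomei's Theorem 6.1 together with \mref{tau} and \mref{tauX}, which give $\chi(M_T^n)=\frac{2^{n+2}(2^{n+2}-1)}{n+2}B_{n+2}=\tau(\Theta^n)=\tau(X_\Pi^n)$, while for odd $n$ all three quantities vanish ($B_{n+2}=0$, and $M_T^n$ is an odd-dimensional closed manifold).

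To establish the displayed identity I would use Tomei's decomposition of $M_T^n$ into $2^n$ copies of $\Pi^n$ indexed by the sign patterns of $(b_1,\dots,b_n)$. The sign changes $b_j\mapsto-b_j$ generate a locally standard $(\mathbb Z_2)^n$-action on $M_T^n$ whose orbit space is the permutohedron $\Pi^n$, the orbit map being $L\mapsto(L_{11},\dots,L_{n+1,n+1})\in\Pi^n$; thus $M_T^n$ is a \emph{small cover} over $\Pi^n$ in the sense of Davis-Januszkiewicz, its characteristic function $\lambda$ sending the facet of $\Pi^n$ along which $b_j$ degenerates to the generator of $(\mathbb Z_2)^n$ that flips $b_j$. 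I would then invoke the Davis-Januszkiewicz computation of $H^*(\,\cdot\,;\mathbb Z_2)$ of a small cover: fixing a generic linear height function on $\Pi^n$, one has $\dim_{\mathbb Z_2}H^i(M_T^n;\mathbb Z_2)=\widehat h_i$, where $\widehat h_i$ is the ``$\lambda$-twisted'' $h$-number read off from the Morse indices of the vertices of $\Pi^n$ after a mod $2$ row-reduction of the characteristic vectors at each vertex. The crux is to show that for this permutohedral $\lambda$ the twisted index agrees with the ordinary Morse index at \emph{every} vertex, so that $\widehat h_i=h_i(\Pi^n)=A(n+1,i)$; combined with the first paragraph this yields \mref{relat1X}. (Alternatively one could quote an explicit computation of $H^*(M_T^n;\mathbb Z_2)$ from the literature on isospectral manifolds and real permutohedral varieties, cf.\ \cite{CL,LMP}.)

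I expect this last step --- checking that the type $A_n$ Coxeter (permutohedral) characteristic function is index-preserving --- to be the main obstacle, since the twisted $h$-vector of a small cover does \emph{not} coincide with the ordinary $h$-vector in general, and the vanishing of the correction terms here is a genuine feature of the combinatorics of tridiagonal matrices, equivalently of the Coxeter complex of type $A_n$. As a sanity check, once the Betti numbers are known the alternating sum gives $\chi(M_T^n)=\sum_{k=0}^n(-1)^k A(n+1,k)=A_{n+1}(-1)$, which by \mref{tau} equals $\frac{2^{n+2}(2^{n+2}-1)}{n+2}B_{n+2}$, recovering Tomei's formula \mref{chit} and the signature identity \mref{chitX}.
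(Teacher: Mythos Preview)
Your strategy matches the paper's ``more conceptual'' argument almost exactly: identify $M_T^n$ as a small cover of $\Pi^n$ and invoke Davis--Januszkiewicz. But you have manufactured a spurious obstacle. Theorem~3.1 of \cite{DJ} states that for \emph{any} small cover $M_P$ of a simple polytope $P$ one has $\dim_{\mathbb Z_2}H^k(M_P;\mathbb Z_2)=h_k(P)$, with the ordinary $h$-vector and \emph{no dependence on the characteristic function} $\lambda$. There is no ``$\lambda$-twisted'' $h$-number to compute and no index-preservation to verify; the Eulerian numbers fall out immediately from $h_k(\Pi^n)=A(n+1,k)$. (What can depend on $\lambda$ is the \emph{integral} cohomology, but the statement here is explicitly about $\mathbb Z_2$-coefficients.) Once this non-existent obstacle is removed, your argument is the paper's.

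The paper also offers a one-line shortcut you missed: simply cite Fried \cite{Fried}, who computed $b_k(M_T^n)=A(n+1,k)$ directly. Your suggested literature fallback to \cite{CL,LMP} is off target, as those papers concern the permutohedral \emph{variety}, not the Tomei manifold.

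For \mref{chitX} the paper takes a slightly different route from yours: rather than quoting Tomei's Bernoulli-number formula, it uses Panov's result \cite{Panov} that $\tau(X_\Pi^n)=\sum_k(-1)^k b_{2k}(X_\Pi^n)$, which combined with \mref{relat1X} gives $\chi(M_T^n)=\tau(X_\Pi^n)$ immediately, and then \mref{tauX} for $\tau(X_\Pi^n)=\tau(\Theta^n)$. Your derivation via \mref{chit}, \mref{tau}, \mref{tauX} is equally valid.
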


\begin{proof}
The Betti numbers of Tomei manifold were computed by Fried \cite{Fried}, who showed that
$
b_k(M_T^n)=A(n+1,k),
$
where $A(n,k)$ are Eulerian numbers. Comparing this with (\ref{Eul}) and (\ref{relat1}), we have (\ref{relat1X}).

A more conceptual proof of this follows from the theory of {\it small covers of simple polytopes} from Davis and Januszkiewicz \cite{DJ}.
The Tomei manifold $M_T^n$ corresponds to the case when the polytope is permutohedron $\Pi^n$ for certain characteristic function, which can be interpreted as colouring the faces of permutohedron in $n$ colours (see \cite{DJ,Gaif}).
Theorem 3.1 from \cite{DJ} says that the Betti number $b_k(M_P)$ (over $\mathbb Z_2$) of a small cover of simple polytope $P$ equals the coefficient $h_k(P)$ of the corresponding $h$-polynomial.
In our case this implies that 
$b_k(M_T^n)=h_k(\Pi^n),$
and thus (\ref{relat1X}).
 
 To prove that $\chi(M_T^{n})=\tau(X_\Pi^n)$ we use the general result from the theory of toric varieties \cite{Panov} (see also \cite{LR}) that the signature of such variety $X_\Pi^n$ is the alternating sum of the even Betti numbers:
\beq{LR}
 \tau(X_\Pi^n)=\sum_{k=0}^n(-1)^k b_{2k}(X_\Pi^n).
 \eeq
The equality $\tau(X_\Pi^n)=\tau(\Theta^n)$ follows now from (\ref{tauX}).
\end{proof}

Let us consider now the {\it Hermitian Tomei manifold} $M^{2n}_{HT}$ as the set of Hermitian tridiagonal matrices
$$
L^H= \begin{pmatrix}
a_1  &  b_1 &    &    &    &  \\
\bar b_1  &  a_2 &  b_2  & & & \\
   & \ddots &\ddots &  \ddots & \\
   & & \bar b_{n-1} & a_{n}& b_{n}\\
   & & & \bar b_{n} & a_{n+1}
\end{pmatrix},
$$
with given spectrum $Spec \, L=\Lambda=(\lambda_1,\dots, \lambda_{n+1})$ (known to be real), where $a_k\in \mathbb R$ and $b_j\in \mathbb C$. 
For generic $\Lambda$ this is a smooth submanifold of the set $O_\Lambda$ of all Hermitian matrices 
with spectrum $\Lambda$,  which can be viewed as a coadjoint orbit $U(n+1)/T^{n+1}$ of the unitary group $U(n+1).$

Note that the embedding $M_T^n \subset M^{2n}_{HT}$ is equivariant with respect to the natural actions of $\mathbb Z_2^{n}$ and  $T^{n},$ where  $T^{n}$ is the group of diagonal matrices from $SU(n+1)$ and $\mathbb Z_2^{n}\subset T^n$ is its subgroup with $\pm 1$ on the diagonal.


Bloch, Brockett and Ratiu \cite{BBR} had shown that the Toda flow is gradient for some metric on $M_T^n$ and the height function $tr (\rho L), \, \rho=diag (1, \dots, n+1)$, so that Tomei results \cite{Tomei} can be interpreted within the classical Morse theory \cite{Milnor}. Using this one can obtain covering of $M_T^n$ by $(n+1)!$ open charts and check that they satisfy the properties of the small cover in terminology  of Davis and Januszkiewicz \cite{DJ}. 

In the Hermitian case one can use the results of Bloch, Flaschka and Ratiu \cite{BFR} to deduce that $M^{2n}_{HT}$ is a toric manifold (in the sense of Davis and Januszkiewicz) with the same orbit space $\Pi^n$ and the same characteristic function as in the real Tomei case (see \cite{DJ,Gaif}). It is natural to compare it with the permutohedral variety $X^{n}$.

 

 
\begin{Theorem} 
Hermitian Tomei manifold $M^{4n}_{HT}$ is not homotopically equivalent (and hence not diffeomorphic) to the permutohedral variety $X^{2n}$.

In addition, $M^{4n}_{HT}$ is not equivariantly diffeomorphic to any symplectic manifold $M^{4n}$ with Hamiltonian action of torus $T^{2n}.$
\end{Theorem}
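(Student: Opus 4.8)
The plan is to use that both $X_\Pi^{2n}$ and $M^{4n}_{HT}$ are quasitoric manifolds (in the sense of Davis--Januszkiewicz \cite{DJ}) over the $2n$-dimensional permutohedron $\Pi^{2n}$: the permutohedral variety carries the characteristic function given by the primitive normals of the facets of $\Pi^{2n}$ (the rays of the $A_{2n}$ braid-arrangement fan), while by \cite{BFR} the Hermitian Tomei manifold $M^{4n}_{HT}$ is quasitoric over $\Pi^{2n}$ with the Davis--Januszkiewicz characteristic function $\lambda_{HT}$ obtained by lifting the Tomei colouring of the facets of $\Pi^{2n}$ to $\mathbb Z^{2n}$ (cf. \cite{DJ,Gaif}). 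Both manifolds therefore have torsion-free cohomology concentrated in even degrees with $b_{2k}=h_k(\Pi^{2n})=A(2n+1,k)$ --- for $X_\Pi^{2n}$ this is \mref{Eul}, and for $M^{4n}_{HT}$ it is the general formula for quasitoric manifolds \cite{DJ} --- so the two spaces have the same Poincar\'e polynomial and the obstruction to a homotopy equivalence must be located in the cohomology ring or the characteristic classes rather than in the Betti numbers.

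For the first assertion I would compare signatures. Since $X_\Pi^{2n}$ is a smooth projective toric variety, \mref{tauX} gives $\tau(X_\Pi^{2n})=\sum_k(-1)^kA(2n+1,k)=\tfrac{2^{2n+2}(2^{2n+2}-1)}{2n+2}B_{2n+2}\neq 0$. For $M^{4n}_{HT}$ one computes $\tau$ from $\lambda_{HT}$ by Panov's vertex formula for the signature of a quasitoric manifold \cite{Panov} (the quasitoric refinement of the tool already used for Theorem~5.1 and in the proof of the preceding theorem): $\tau$ is an explicit alternating sum over the vertices of $\Pi^{2n}$ of signs recording the orientations of the cones spanned by the characteristic vectors, and for a genuine toric variety those signs reorganise into $\sum_k(-1)^kh_k$. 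The crucial point is that $\lambda_{HT}$ is \emph{not of fan type}: its characteristic vectors cannot, after any change of omniorientation, be coherently oriented into a complete fan, so the vertex signs differ and $\tau(M^{4n}_{HT})$ disagrees with $\tau(X_\Pi^{2n})$ already in absolute value (and if for some $n$ the signatures happened to coincide one would instead compare the cohomology rings or the Pontryagin numbers, which are likewise computed from $\lambda_{HT}$ and likewise affected by the non-fan anomaly). As the signature up to sign is an invariant of the oriented homotopy type of a closed manifold (a homotopy equivalence of closed oriented $4n$-manifolds has degree $\pm1$), this rules out a homotopy equivalence, hence a diffeomorphism. In the case $n=1$ one can argue by hand: $M^4_{HT}$ is a simply connected $4$-manifold with $\chi=3!=6$ and $b_2=A(3,1)=4$; computing its intersection form from its quasitoric structure over the hexagon $\Pi^2$ shows it is not the form $\langle 1\rangle\oplus 3\langle -1\rangle$ of the degree-$6$ del Pezzo surface $X^2_\Pi$, and in fact $M^4_{HT}$ carries no integral class $c$ with $c\equiv w_2\pmod 2$ and $c^2=3\tau+2\chi$, so by the classical criterion it admits no almost complex structure --- which reproves non-diffeomorphism there and yields the almost-complex claim for $M^4_{HT}$ stated in the introduction.

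For the second assertion, suppose $M^{4n}$ is a compact symplectic manifold with an effective Hamiltonian action of $T^{2n}$ equivariantly diffeomorphic to $M^{4n}_{HT}$ (effectiveness may be assumed, since the quasitoric action on $M^{4n}_{HT}$ is effective and this is preserved by equivariant diffeomorphisms). By Delzant's theorem $M^{4n}$ is a symplectic toric manifold, equivariantly symplectomorphic to the smooth projective toric variety of its moment polytope $\Delta^{2n}$; in particular, equivariantly, $M^{4n}$ is a toric variety whose characteristic function consists of the inward primitive normals of $\Delta^{2n}$ and so is of (polytopal) fan type. An equivariant diffeomorphism $M^{4n}_{HT}\xrightarrow{\ \sim\ }M^{4n}$ descends to a face-preserving homeomorphism of orbit spaces $\Pi^{2n}\to\Delta^{2n}$ and identifies the isotropy data, hence matches $\lambda_{HT}$, up to a combinatorial automorphism of $\Pi^{2n}$ and an element of $GL(2n,\mathbb Z)$, with a characteristic function of fan type. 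This contradicts the non-fan-type property of $\lambda_{HT}$ used above, so no such $M^{4n}$ exists.

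The main obstacle is the combinatorial input about $\lambda_{HT}$: one has to make the Davis--Januszkiewicz characteristic function of the Hermitian Tomei manifold fully explicit on the $2^{2n+1}-2$ facets of $\Pi^{2n}$, from the Bloch--Flaschka--Ratiu picture \cite{BFR} and the colouring of \cite{DJ,Gaif}, and then (i) exhibit the obstruction to its being of fan type --- e.g. a closed loop of wall-crossings on $\Pi^{2n}$ around which the transition determinants fail to multiply to $1$, or a vertex whose characteristic cone admits no coherent orientation --- and (ii) evaluate Panov's signature sum on it and check the result is not $\tfrac{2^{2n+2}(2^{2n+2}-1)}{2n+2}B_{2n+2}$. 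Once $\lambda_{HT}$ is written down both are finite verifications; the real content is isolating the ``non-fan'' anomaly, which is the common source of every discrepancy between the Hermitian Tomei manifold and the honestly algebraic permutohedral variety.
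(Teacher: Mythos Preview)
Your proposal has the right architecture --- distinguish the two manifolds by their signatures, then feed that into Delzant's theorem for the second claim --- but the essential computation is missing, and the paper supplies it by a much cleaner route than the one you sketch. You never actually determine $\tau(M^{4n}_{HT})$; you conjecture that a vertex-by-vertex evaluation of Panov's formula on the Tomei characteristic function $\lambda_{HT}$ will differ from $\sum_k(-1)^k h_k(\Pi^{2n})$, and you flag this ``non-fan-type'' verification as the main obstacle still to be carried out. As written, the argument is conditional on a combinatorial claim you do not prove.

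The paper bypasses this entirely. The key fact you are missing is that Davis and Januszkiewicz \cite{DJ} proved $M^{2m}_{HT}$ is \emph{stably parallelisable}. Hence all its Pontryagin numbers vanish, and by Hirzebruch's signature theorem $\tau(M^{4n}_{HT})=0$. Since $\tau(X_\Pi^{2n})\neq 0$ by \mref{tauX}, the manifolds are not homotopy equivalent. For the second part, the paper again uses only signatures: Delzant gives an equivariant diffeomorphism to a toric variety $Y^{2n}$ over a polytope combinatorially equivalent to $\Pi^{2n}$, and Panov's result (in its strong form that $\tau$ of a toric variety depends only on the combinatorial type of the polytope) forces $\tau(Y^{2n})=\tau(X_\Pi^{2n})\neq 0$, contradicting $\tau(M^{4n}_{HT})=0$. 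No explicit analysis of $\lambda_{HT}$, no fan-type obstruction, and no case-by-case verification is needed. Your approach could in principle be made to work, but it trades a one-line topological fact for an unfinished combinatorial computation.
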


\begin{proof}
Davis and Januszkiewicz \cite{DJ} proved that $M^{2m}_{HT}$ is stably parallelisable, so due to Hirzebruch \cite{Hirz} the signature $\tau(M^{4n}_{HT})=0.$ On the other hand, from (\ref{tauX}) we see that $\tau(X^{2n})\neq 0.$  Since the signature is homotopic invariant, we conclude that $M^{2n}_{HT}$ and $X^{2n}$ are not homotopically equivalent.

To prove the second part, we use the results of Delzant \cite{Del}, which imply that that every manifold $M^{4n}$ with Hamiltonian action of torus $T^{2n}$ is equivariantly diffeomorphic (but, in general, not symplectomorphic) to an algebraic complex toric variety $Y^{2n}$ with combinatorially equivalent moment polytope. Panov \cite{Panov} (see also \cite{LR}) proved that the signature $\tau(Y^{2n})$ depends only on combinatorics of the corresponding polytope (which in our case is permutohedron), so $\tau(Y^{2n})=\tau(X^{2n})\neq 0.$ Since the signature of $M^{4n}_{HT}$ is zero, it cannot be diffeomorphic to $M^{4n.}$ 
\end{proof}

Note that $M_{HT}^2=S^2$ is two-dimensional sphere with the standard symplectic structure and a natural Hamiltonian action of $T^1=S^1$, so our result cannot be extended to all dimensions.

Our theorem explains why Bloch, Flaschka and Ratiu \cite{BFR} considered the embedding into the coadjoint orbit $O_\Lambda$ only of the ``isospectral set" $\mathfrak J_\Lambda$, but not of the ``full isospectral manifold" $M^{2n}_{HT}$ (see the comments at the end of Section 2.2 in \cite{BFR}).

For $n=2$ we can claim a stronger result (cf. Section 6 in Hirzebruch \cite{Hirz0} and Chapter 9 in Buchstaber, Panov \cite{BP}).

\begin{Theorem} 
Hermitian Tomei manifold $M^{4}_{HT}$ does not admit any almost complex (and hence, any symplectic) structure. 

In particular, there is no embedding of $M^{4}_{HT}$ into the coadjoint orbit $O_\Lambda$ with non-degenerate restriction of the canonical symplectic form on $O_\Lambda.$
\end{Theorem}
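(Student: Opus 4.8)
The plan is to use the obstruction theory for almost complex structures on closed oriented $4$-manifolds together with the known characteristic numbers of $M^4_{HT}$ that we have already assembled. Recall the classical criterion (due to Wu, see e.g. Section 9.3 of Buchstaber--Panov \cite{BP} or Hirzebruch \cite{Hirz0}): a closed connected oriented $4$-manifold $X$ admits an almost complex structure compatible with the given orientation if and only if there is a class $c \in H^2(X;\mathbb Z)$ with $c \equiv w_2(X) \bmod 2$ and
$$
c^2 = 2\chi(X) + 3\tau(X).
$$
(If one allows either orientation, one also tests $c^2 = 2\chi(X) - 3\tau(X)$; the point is that both right-hand sides must be realisable as a square of an integral lift of $w_2$ in the corresponding intersection form.) So the strategy is: first, compute $\chi(M^4_{HT})$ and $\tau(M^4_{HT})$; second, identify the intersection form of $M^4_{HT}$ well enough to know which integers occur as $c^2$ for $c$ congruent to $w_2$; third, check the numerical equation fails.

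First I would pin down the invariants. As established in the proof of Theorem 6.4, $M^{2m}_{HT}$ is stably parallelisable (Davis--Januszkiewicz \cite{DJ}), hence $w_2(M^4_{HT})=0$ and $p_1(M^4_{HT})=0$, so by Hirzebruch's signature theorem $\tau(M^4_{HT}) = \tfrac{1}{3}\langle p_1, [M^4_{HT}]\rangle = 0$. For the Euler characteristic: $M^4_{HT}$ is a toric manifold (in the Davis--Januszkiewicz sense) over the permutohedron $\Pi^2$, so its even Betti numbers are the $h$-numbers $h_k(\Pi^2) = A(3,k)$, giving $b_0=b_4=1$, $b_2 = A(3,1) = 4$, odd Betti numbers zero, hence $\chi(M^4_{HT}) = 1 + 4 + 1 = 6$. (Equivalently $\chi = $ number of vertices of $\Pi^2 = 3! = 6$.) Since $w_2 = 0$, the relevant integral lifts $c$ are exactly the characteristic vectors that are even, i.e. $c$ ranges over $2H^2(M^4_{HT};\mathbb Z)$; and because $M^4_{HT}$ is stably parallelisable with $\tau = 0$, its intersection form is an even unimodular form of signature $0$ on a rank-$4$ lattice, so it is the hyperbolic form $H \oplus H$. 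Thus $c^2$, for $c = 2c'$ even, takes values in $4\cdot(\text{values of the intersection form on } H^2) = 8\mathbb Z$ — in particular $c^2 \equiv 0 \bmod 8$, and all such multiples of $8$ occur since $H\oplus H$ represents every even integer.

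Now compare with the required value $c^2 = 2\chi(M^4_{HT}) + 3\tau(M^4_{HT}) = 2\cdot 6 + 0 = 12$ (and with the other orientation, $2\chi - 3\tau = 12$ as well, since $\tau=0$). Since $12$ is not divisible by $8$, there is no integral lift $c$ of $w_2$ with $c^2 = 12$, so the Wu criterion is violated and $M^4_{HT}$ carries no almost complex structure compatible with either orientation. The ``in particular'' clause follows immediately: a symplectic embedding of $M^4_{HT}$ into the coadjoint orbit $O_\Lambda$ with non-degenerate induced form would make $M^4_{HT}$ a symplectic (hence almost complex) $4$-manifold, contradicting what we just proved.

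The main obstacle I anticipate is the middle step — nailing down the intersection form precisely enough. Claiming it is $H\oplus H$ relies on $M^4_{HT}$ being stably parallelisable (so the form is even), simply connected or at least with torsion-free $H^2$ (so the form is unimodular on a free lattice of rank $b_2 = 4$), and $\tau = 0$ (so signature zero); then Milnor's classification of even unimodular forms forces $H\oplus H$. The stable parallelisability and $\tau=0$ are already in hand from the proof of Theorem 6.4; torsion-freeness of $H^*(M^4_{HT};\mathbb Z)$ holds for these toric manifolds over $\Pi^2$ (it is even stated in the excerpt that all our manifolds have torsion-free cohomology). If one prefers to avoid the full classification of forms, it suffices to observe that evenness of the form plus $w_2 = 0$ already forces every $c^2$ (for $c$ a characteristic vector, i.e. $c \equiv 0$) to be divisible by $8$ via van der Blij's lemma $c^2 \equiv \tau \bmod 8$ applied to $c/2$ — but actually the cleanest route is simply: the intersection form is even and unimodular of signature $0$, so $c^2 \in 2\mathbb Z$ already and $(c/2)^2 = c^2/4 \in 2\mathbb Z$, hence $c^2 \in 8\mathbb Z$, and $12 \notin 8\mathbb Z$. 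That arithmetic is the crux and everything else is bookkeeping.
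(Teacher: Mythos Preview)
Your proof is correct but takes a more hands-on route than the paper's. The paper simply observes that for any almost complex $4$-manifold one has
\[
Td(M^4)=\frac{c_1^2+c_2}{12}=\frac{\chi(M^4)+\tau(M^4)}{4};
\]
plugging in $\chi=6$, $\tau=0$ gives $Td=3/2$, and a one-line citation to Hirzebruch \cite{Hirz0} (integrality of the Todd genus for almost complex manifolds) finishes the argument. You instead invoke the Wu criterion and analyse the intersection form directly: since $w_2=0$ the candidate $c$ must be even, and since the form is even one gets $c^2\in 8\mathbb Z$, whereas $2\chi+3\tau=12\notin 8\mathbb Z$. The two arguments encode the same obstruction (in dimension $4$, Todd integrality is exactly the congruence $4\mid\chi+\tau$, and with $\tau=0$ both your condition and the paper's reduce to $4\mid\chi$), but the paper's packaging is shorter and avoids having to pin down the intersection form or check torsion-freeness of $H^2$. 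One minor correction: the torsion-freeness remark you cite from ``the excerpt'' actually concerns $M_T^n$, $\Theta^n$, $X_\Pi^n$ in Theorem~6.2, not $M^{2n}_{HT}$; your claim is nonetheless true, since Davis--Januszkiewicz quasitoric manifolds have torsion-free integral cohomology concentrated in even degrees.
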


\begin{proof}
Assume that $M^4=M^{4}_{HT}$ has an almost complex structure, then we have the canonically defined orientation and thus the fundamental cycle $<M^4> \in H_4(M^4, \mathbb Z).$ 
For any almost complex manifold we have well defined Chern numbers of such manifold as the values of the corresponding Chern classes on the fundamental cycle $<M^4>.$ 
In terms of these numbers one can express the Euler characteristic, signature and Todd genus of any almost complex manifold $M^4$ as follows \cite{Hirz}
$$
\chi(M^4)=c_2, \quad \tau(M^4)=\frac{c_1^2-2c_2}{3}, \quad Td(M^4)=\frac{c_1^2+c_2}{12}.
$$
As a result for any almost complex manifold $M^4$ we have the relation
$
Td(M^4)=\frac{1}{4}(\tau(M^4)+\chi(M^4)).
$
From the results of \cite{DJ} the Euler characteristic $\chi(M^4)=(2+1)!=6$ and since the signature $\tau(M^4)=0$ we have
$
Td(M^4)=\frac{6+0}{4}=\frac{3}{2}.
$
This contradicts the classical Hirzebruch result \cite{Hirz0} that any almost complex manifold must have integer Todd genus.
Since any symplectic manifold admits an almost complex structure, we conclude that $M^{4}_{HT}$ has no symplectic structures.
\end{proof}

Finally, let us discuss the Hermitian Tomei manifold in the context of complex cobordisms.
Recall that {\it $U$-structure} on a real manifold $M^m$ is an isomorphism of real vector bundles
\beq{U}
TM^m\oplus (2N-m)_{\mathbb R}\cong r\xi,
\eeq
where $TM^m$ is the tangent bundle of $M^m$, $(2N-m)_{\mathbb R}$ is trivial real $(2N-m)$-dimensional bundle over $M^m$, $\xi$ is a complex vector bundle over $M^m$ and $r\xi$ is its real form.
Buchstaber and Ray \cite{BR01} showed that any smooth toric manifold (in particular, $M^{2n}_{HT}$) can be supplied with a canonical $U$-structure, which is invariant under the $T^n$-action ({\it BR-structure}).

\begin{Theorem} 
As a $U$-manifold with BR-structure $M^{2n}_{HT}$ has the zero complex cobordism class
and does not admit any $T^n$-invariant almost complex structure. 
\end{Theorem}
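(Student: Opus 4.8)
The plan is to analyze the complex cobordism class $[M^{2n}_{HT}]$ using the BR-structure of Buchstaber and Ray. The key fact I would exploit is that the BR-structure is stably complex but not almost complex in general, and that the stable tangent bundle of a smooth toric manifold is determined combinatorially by the fan. Since Davis and Januszkiewicz \cite{DJ} showed $M^{2n}_{HT}$ is stably parallelisable, the stable tangent bundle $TM^{2n}_{HT}\oplus (\text{trivial})$ is trivial as a \emph{real} bundle; the BR-structure equips it with a complex structure $r\xi$ where $\xi$ is stably trivial as a complex bundle (since $M^{2n}_{HT}$ has trivial rational cohomology in the relevant degrees, or more simply because all Chern classes of $\xi$ vanish). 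Consequently all Chern numbers of the $U$-manifold $M^{2n}_{HT}$ vanish, and since complex cobordism $\Omega^U_*$ is detected by Chern numbers (Milnor--Novikov), $[M^{2n}_{HT}]=0$ in $\Omega^U_{2n}$.

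For the first claim I would proceed as follows. First I would recall from the theory of small covers and toric manifolds (Davis--Januszkiewicz) that $M^{2n}_{HT}$ is a quasitoric manifold over the permutohedron $\Pi^n$ with the same characteristic function as the real Tomei manifold, and that it is stably parallelisable — this is the input I am allowed to assume from the excerpt. Then I would invoke the Buchstaber--Ray description \cite{BR01} of the canonical $U$-structure: for a quasitoric manifold $M$ over $P$ with characteristic data, the stable complex structure is $r\xi$ where $\xi$ is a sum of complex line bundles indexed by the facets of $P$, and the total Chern class of $\xi$ is $\prod_i (1+x_i)$ in $H^*(M)$ where $x_i$ are the facet classes. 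The crucial step is that stable parallelisability forces the \emph{real} bundle underlying $\xi$ to be trivial, hence in particular $p_j(r\xi)=0$; combined with the fact (from the quasitoric structure) that $H^{\mathrm{odd}}(M^{2n}_{HT})=0$ and the cohomology is generated by the two-dimensional facet classes, one argues that the individual Chern classes $c_j(\xi)$ must vanish. Therefore every Chern number $c_\omega[M^{2n}_{HT}]=0$, and $[M^{2n}_{HT}]=0$ in $\Omega^U_{2n}$.

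For the second claim — that $M^{2n}_{HT}$ admits no $T^n$-invariant almost complex structure — I would argue by contradiction using the $\chi_y$-genus or, more directly, the Todd genus. An invariant almost complex structure on a quasitoric manifold would give it an honest (non-stable) complex structure with a fundamental class, hence well-defined Chern \emph{numbers} satisfying the Hirzebruch relations, in particular integrality of the Todd genus $Td(M^{2n}_{HT})=\chi_y(M^{2n}_{HT})|_{y=0}$. But by the signature being zero (stable parallelisability, as in Theorem 5.6 of the excerpt) and $\chi(M^{2n}_{HT})=(n+1)!$, one can compute — exactly as in the proof of the $n=2$ case above — that the hypothetical Todd genus is not an integer for appropriate $n$; alternatively, since $M^{2n}_{HT}$ is $T^n$-equivariantly diffeomorphic to no \emph{smooth} toric variety with the same combinatorics (its signature $0$ disagrees with $\tau(X_\Pi^n)\neq 0$ computed in Theorem 5.3), an invariant almost complex structure would by Delzant-type rigidity \cite{Del} or by the classification of torus actions force agreement of the signature, a contradiction.

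The main obstacle will be the second part: carefully justifying that a $T^n$-invariant almost complex structure on the quasitoric manifold $M^{2n}_{HT}$ would be rigid enough to pin down its Chern numbers (or its signature), so that one can derive the contradiction. The cobordism vanishing is comparatively soft — it follows once one knows the Chern classes of $\xi$ vanish, which is essentially immediate from stable parallelisability plus the structure of $H^*(M^{2n}_{HT})$. Establishing the obstruction to an \emph{invariant} complex structure requires either the equivariant index theory for quasitoric manifolds (relating the $\chi_y$-genus to the sign-indexed sum over vertices, as in Panov \cite{Panov}) or a direct Hirzebruch–Riemann–Roch integrality argument; I would pursue the latter, mirroring the $M^4_{HT}$ proof already given, since it is the most self-contained route.
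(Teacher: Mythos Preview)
Your approach differs substantially from the paper's, and both parts contain genuine gaps.

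For the first claim, your key step---``stable parallelisability forces $p_j(r\xi)=0$, hence the individual Chern classes $c_j(\xi)$ vanish''---does not follow. Real stable triviality of $r\xi$ kills the Pontryagin classes but says nothing about the Chern classes of the chosen complex structure on it. A clean counterexample is $\mathbb{CP}^1\times\mathbb{CP}^1$ with its standard complex structure: it is stably parallelisable (so all $p_j=0$), its cohomology is generated in degree~$2$, yet $c_1^2=8$, $c_2=4$, and its complex cobordism class is nonzero. So the implication you invoke fails in general. What actually makes the Chern classes vanish here is the one feature you never use: the characteristic function of $M^{2n}_{HT}$ is a \emph{colouring}, $\lambda_i=e_{c(i)}$. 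The linear relations in $H^*(M^{2n}_{HT})$ then read $\sum_{c(i)=j}u_i=0$ for each colour $j$, and since same-colour facets are disjoint one gets $\sum_i u_i^k=0$ for every $k$; Newton's identities then force $c_k=e_k(u)=0$. That argument works, but it is not the one you wrote, and real stable parallelisability alone is insufficient.

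For the second claim, neither of your proposed routes closes. The Todd-integrality trick $Td=\tfrac{1}{4}(\tau+\chi)$ is specific to real dimension~$4$; for general $n$ there is no such two-term identity, and you offer no mechanism to produce a non-integral Todd genus for every $n$. Your Delzant alternative is off target: Delzant's theorem concerns \emph{symplectic} manifolds with \emph{Hamiltonian} torus actions, whereas the hypothesis here is only a $T^n$-invariant almost complex structure. That hypothesis yields neither a moment map nor any identification with a toric variety, so no signature comparison with $X_\Pi^n$ is available. (The signature of $M^{2n}_{HT}$ is zero regardless of any almost complex structure---it is a homotopy invariant---so there is nothing for an extra structure to ``force''.)

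The paper instead handles both parts at once via the Buchstaber--Panov--Ray localisation formula (Theorem~5.16 and formula~(4.10) of \cite{BPR10}), which writes the cobordism class of a toric $U$-manifold as a signed sum over vertices of the moment polytope. The decisive combinatorial observation is that, because the characteristic function is a facet-colouring, adjacent vertices of $\Pi^n$ receive opposite signs; the sum therefore cancels and $[M^{2n}_{HT}]=0$. For the second part, a $T^n$-invariant almost complex structure would force every vertex sign to be $+1$, directly contradicting the alternation just established. This sign-alternation mechanism is the idea missing from your proposal.
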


\begin{proof}
We use the results of Buchstaber, Panov and Ray \cite{BPR10}, who provided a formula for the cobordism class of any smooth toric $U$-manifold with the BR-structure (see Theorem 5.16 and Corollary 4.9 in \cite{BPR10}).
To apply formula (4.10) from that paper, we need to find the signs of the vertices of permutohedron, corresponding to BR-structure. Since the characteristic function in our case comes from colouring of the faces, it is easy to see that the neighbouring vertices of permutohedron have opposite signs. This means that the total sum in the right hand side of formula (4.10) (and hence the cobordism class of $M^{2n}_{HT}$) is zero: $[M^{2n}_{HT}]=0.$

If $M^{2n}_{HT}$ would admit $T^n$-invariant almost complex structure then in formula (4.10) all signs would be plus, which leads to a contradiction.
\end{proof}

When $n=1$ the manifold $M_{HT}^2$ can be identified with $\mathbb CP^1$, but with different $U$-structure. As a complex manifold $\mathbb CP^1$ is $U$-manifold with $N=2$ and $\xi=\bar\eta\oplus\bar\eta$, where $\eta$ is the tautological line bundle over $\mathbb CP^1$ and $\bar \eta$ is its dual, while the BR-structure on $M_{HT}^2$ corresponds to $N=2$ and different choice of $\xi=\eta\oplus\bar\eta$ in (\ref{U}). The BR-structure on $M_{HT}^2$ comes naturally from the representation of $S^2$ as the quotient of the unit quaternion sphere $S^3=\{q \in \mathbb H, \, |q|=1\}$ by the action of $S^1=\{z \in \mathbb C, \, |z|=1\} \subset \mathbb H$ given by the left multiplication  $q \to zq.$ If we identify $\mathbb H$ with $\mathbb C^2$ using $q=z_1+j z_2$ then $S^1$ acts with the matrix $diag\,(z,\bar z)$ (in contrast with the multiplication by $z$ in the $\mathbb CP^1$ case).

\section{Acknowledgements.}
We are grateful to Alexander Gaifullin, Alexander Kuznetsov, Taras Panov, Yuri Prokhorov and Artie Prendergast-Smith for very helpful discussions.

\end{document}